\newtheorem{theorem}{Theorem}
\newtheorem{assumption}[theorem]{Assumption}
\newtheorem{lemma}[theorem]{Lemma}
\newtheorem{proposition}[theorem]{Proposition}
\newenvironment{remark}{\rem\rm}{\endrem}
\newcounter{unnumber}
\newenvironment{proof}{\prf\rm}{\hfill{$\blacksquare$}\endprf}
\newcommand{\R}{\mathbb{R}}%
\newcommand{\N}{\mathbb{N}}%
\renewcommand{\>}{\right\rangle}
\newcommand{\<}{\left\langle}
\DeclareMathOperator*\Id{Id}%
\DeclareMathOperator*\fix{Fix}
\DeclareMathOperator*\im{Im}
\title{Subgradient Splitting Methods for Nonsmooth Fractional Programming with  Fixed-Point Constraints}
\author{ 
Mootta Prangprakhon\thanks{Department of Mathematics, Faculty of Science, Khon Kaen University, Khon Kaen 40002, Thailand,
	email: mootta\_prangprakhon@hotmail.com.}
	\and
	Nimit  Nimana\thanks{Department of Mathematics, Faculty of Science, Khon Kaen University, Khon Kaen, 40002 Thailand,
		email: nimitni@kku.ac.th.}}
\begin{document}
		\maketitle
	\begin{abstract}
 We consider a class of nonsmooth fractional programming problems with fixed-point constraints, where the numerator is convex and the denominator is concave.
To solve this problem, we propose splitting algorithms that compute subgradient steps separately for the convex numerator and the concave denominator. These methods offer a straightforward approach by eliminating the need to solve subproblems at each iteration. By leveraging fixed-point constraints, the proposed algorithms are particularly well-suited for problems with complex constraint structures.
Under certain assumptions, we establish the convergence of the proposed methods.
Furthermore, to address large-scale optimization, we propose an incremental subgradient algorithm for a class of nonsmooth sum-of-ratios fractional programming problems and analyze its convergence. 
Finally, we present numerical experiments, including comparative analyses of our algorithms with existing methods, to demonstrate the effectiveness and performance of the proposed approach.
	
		\textbf{Key words:} Fractional programming, Fixed point set, Strongly quasi-nonexpansive operator, Convex subdifferential, Nonsmooth optimization, Sum-of-ratios.
		
		\textbf{MSC Classification:}
       46N10, 47H09, 47H10, 47N10, 49J52, 65K10.
	\end{abstract}

		
		
		\section{Introduction}
Many problems across various domains of mathematics and management science involve the minimization or maximization of an objective function that appears as one or several ratios of functions 
\cite{SY18-1,CZS23,DKM24}.
This scenario is commonly referred to as {\it fractional programming.}
Due to its ability to address a wide range of real-world problems, fractional programming has gained increasing popularity and prominence in recent years.
Consequently, there is now a considerable amount of literature dedicated to exploring the various applications of fractional programming across diverse fields, including economics and finance
\cite{LB16,SM17}, 
engineering \cite{ZJ15,ZBSJ17,CYH13}, 
and operations research \cite{ADS17,DJF16}.
For a more comprehensive overview and applications on fractional programming, we refer to the works of Stancu-Minasia \cite{SM97,SM19} and the references therein.

In this work, we focus on solving a class of nonsmooth fractional programming problems subject to a fixed-point constraint:
 \begin{equation} \label{main-FP}\displaystyle
 \tag{P}
	\begin{array}{ll}
    \mathrm{minimize}\hskip0.2cm\indent  \displaystyle \frac{f(x)}{g(x)}\\
	\mathrm{subject \hskip0.2cm to}\indent \displaystyle  x\in\fix T,
	\end{array}  
 \end{equation}
where the following assumptions hold:
\begin{itemize}
\item[(A1)] $T:\R^k\to\R^k$ is a $\rho$-strongly quasi-nonexpansive operator 
with $\rho>0$;
\item[(A2)]  $f:\R^k\to\R$ is a convex function and $f(x)\geq0$ for all $x\in\im T$;
\item[(A3)] $g:\R^k\to\R$ is a concave function and there exists $M>0$ such that $0<g(x)\leq M$ for all $x\in\im T$,
\end{itemize}
with $\R^k$ denoting a Euclidean space. We define $\theta^*:=\min\limits_{x\in \fix T}\frac{f(x)}{g(x)}$, and denote the solution set of Problem (\ref{main-FP}) as $\mathcal{S}:=\left\{x^*\in \fix T:\frac{f(x^*)}{g(x^*)}=\theta^*\right\},$
providing that the solution set $\mathcal{S}$ is nonempty.


\subsection{Existing Methods for Fractional Programming}

In the literature, several approaches and algorithms have been proposed to handle fractional programming. A classical method for solving such problems is the parametric approach \cite{J66,D67}, which involves reformulating the original problem into a parametric one. According to this approach, a solution to the fractional programming problem can be obtained by solving the corresponding parametric problem. To address this reformulation, {\it Dinkelbach’s algorithm} (DA) \cite{D67} was introduced. This algorithm requires solving a subproblem at each iteration, where both the numerator and denominator functions must be considered simultaneously. The convergence of this method has been established. Moreover, the algorithm has been generalized and further studied by several authors, such as Schaible \cite{S76}, Ibaraki \cite{I83}, and Crouzeix et al. \cite{CFS85}.
 However, it is worth noting that solving the subproblem in each iteration may involve high computational cost and can be as challenging as solving the original fractional programming problem.

While the parametric approach is widely known in fractional programming, there are other classical approaches such as the variable transformation approach \cite{CC62}, which involves transforming the original problem into a more solvable problem, and existing nonlinear programming methods \cite{HI20,B77,KE67}, which utilize certain properties of the entire objective function to find the optimal solution. 
For more details on this discussion, we refer readers to the works of Schaible \cite{S81} and Stancu-Minasian \cite{SM97}.

To propose an alternative direct approach,
Bo\c{t} and Csetnek \cite{BotC17} presented two splitting algorithms called {\it proximal-gradient algorithms} (PGA), which are applicable when the numerator is convex and the denominator is either smooth concave or smooth convex. A key advantage of this approach is that it allows the two functions to be handled separately; specifically, the numerator performs the proximal step while the denominator performs the gradient step at each iteration.
More recent studies have extended this idea by developing splitting-based algorithms with various modifications and acceleration techniques to address specific cases of fractional programming, particularly when the denominator is nonsmooth and convex. These methods typically incorporate the use of subgradients at each iteration to manage the nonsmoothness; see, for example, Zhang and Li \cite{ZL22}, Bo\c{t} et al. \cite{BDL22}, and Li et al. \cite{LSZZ22}. 
In addition, Prangprakhon et al.  \cite{PFN22}   proposed 
a projection-based splitting algorithm called an {\it adaptive projection gradient method} (APGM) for solving smooth convex-concave fractional programming problems. It is worth mentioning that a notable aspect of this work is the nonincreasing property of the adaptive step-size sequence, which can be guaranteed in the paper.

\subsection{Incremental Schemes for Finite-Sum Structure} 

When dealing with large-scale convex optimization tasks where the number of component functions
is large, it is of  interest to consider methods that operate on each component function at each iteration, rather than on the entire objective function. These methods are referred to as {\it incremental methods}. Such methods encompass a variety of approaches that utilize different types of tools, such as gradient, subgradient, and proximal operator.
Among these, we specifically focus on {\it incremental subgradient methods} (ISMs), which are designed to handle a situation where each component function is nondifferentiable. 

ISMs have a long history in optimization. For more extensive details and convergence analyses, we refer the readers to the works of Nedi\'{c} and Bertsekas \cite{NB01}, and Bertsekas \cite{B10}, as well as the references included therein. 
Additionally, within the context of solving fractional programming problems, it is noteworthy that very recently Bo\c{t} et al. \cite{BDL23} have proposed an {\it inertial proximal subgradient method} that relates to incremental-type methods to solve a class of nonsmooth sum-of-ratios fractional programming problems. 

\subsection{Convex Optimization with Fixed-Point Constraints}

Note that projection-based algorithms, such as the APGM, use the metric projection as their main operator, and the ability to compute it depends on the simplicity of the constraint set. Specifically, the constraint set must be simple enough so that a closed-form expression of the metric projection exists. However, in practical scenarios, the constraint set can be highly complex. For example, it may be defined as the intersection of multiple closed and convex subsets, or as the solution set of a nonlinear problem. In such cases, computing the metric projection becomes challenging. 

To address this type of obstacle, previous literature by Yamada \cite{Y01} 
introduced the idea of using a nonexpansive operator 
instead of the metric projection, and interpreting the constraint set of the problem as the fixed-point set.
It is well-known that the fixed-point sets of nonexpansive operators have important properties. For instance, 
one can construct a nonexpansive operator whose fixed-point set coincides with the intersection of the fixed-point sets of other nonexpansive operators. This property is particularly useful when dealing with problems involving multiple constraints.

Motivated by these, Yamada proposed the {\it hybrid steepest descent method} (HSDM), specifically developed to solve convex minimization problems where the constraint is the fixed point set of the nonexpansive operator. Note that in Yamada's work \cite{Y01}, the transformation occurs not only in the algorithm itself but also in the formulation of the constraint set. This shift to fixed-point constraints in optimization has also been applied to problems where the objective function is a sum of convex component functions. Such problems, where the constraint sets are fixed-point sets, allow for the exploration of constrained optimization scenarios in which the closed-form expression of the metric projection onto the constraint set may not be known. 
Problems of this nature have been discussed in various works \cite{IH14,IP16,I18} and cover a range of domains, including optimal control \cite{I12}, bandwidth allocation \cite{I13}, and ensemble learning \cite{I20}. Moreover, it is worth mentioning that some of these works employ operators from broader classes than nonexpansive operators to address more general problems.

Due to similar concerns with the finite-sum structure, Iiduka \cite{I16} proposed an {\it incremental subgradient method} (I-ISM) for solve minimization problems involving a sum of component functions, where the constraint is the intersection of fixed-point sets. The algorithm builds on the conventional incremental approach, which employs the metric projection as its main operator. 

 To the best of our knowledge, the direct approach for solving both single-ratio fractional programming problems and sum-of-ratios fractional programming problems, which involve fixed-point constraints, has seldom been studied in prior literature.
 This is of particular interest and plays a key role in the development of the presented work. 


\subsection{Contributions of the Paper}

With the aforementioned literature overviews and motivations in mind, the contributions of this work are as follows:

\begin{enumerate}
    \item \textbf{Fixed-Point Subgradient Splitting Method (FSSM):} 
    We propose the FSSM to solve a class of nonsmooth convex-concave fractional programming problems with fixed-point constraints. The method computes subgradients of the numerator and denominator separately at each step. Under certain conditions on the step size and the boundedness of the sequence $\{x_n\}_{n=1}^{\infty}$, we establish  subsequential convergence.
    
    \item \textbf{Adaptive Fixed-Point Subgradient Splitting Method (AFSSM):}
    In an attempt to remove the boundedness assumption of the sequence $\{x^n\} _{n = 1}^\infty$ in the FSSM, we develop the AFSSM, a modified version of the FSSM.
    Under appropriate  conditions and the strong convexity of the function $f$, we obtain strong convergence.

    \item \textbf{Incremental Fixed-Point Subgradient Splitting Method (IFSSM):} 
    To tackle large-scale optimization problems, we propose the IFSSM,  designed to solve a class of sum-of-ratios fractional programming problems over the intersection of fixed-point sets. 
   This algorithm is inspired by the conventional ISM and Iiduka's I-ISM \cite{I16}.  Under appropriate assumptions, we obtain subsequential convergence.

  \item \textbf{Key Advantages:} 
        The three methods proposed in this work share two key advantages. First, they provide an alternative to existing techniques, such as the DA \cite{D67} and methods based on proximal operators, by eliminating the need to solve subproblems at each iteration. This simplification makes the optimization process more straightforward and has the potential to reduce both computational time and effort.
Second, by working on fixed-point constraints, the proposed methods are  well-suited for handling problems with complex constraint structures.

    \item \textbf{Numerical Examples:} 
To demonstrate the effectiveness and performance of the proposed methods, we apply our algorithms to solve three types of fractional programming problems and compare their performance with existing methods. The problems are: (i) a smooth fractional programming problem with linear constraints; (ii) a cost-to-profit minimization problem where the profit is modeled by a Cobb–Douglas production function; and (iii) the minimization of a sum of ratios of linear functions.
 \end{enumerate}

\subsection{Organization}

This paper is organized as follows: 
In Section \ref{sect-2}, we introduce the notation and recall important definitions, propositions, and lemmas.
Section \ref{algorithm 12} is divided into two parts: Subsection \ref{sect-3.1} presents the FSSM 
and its general convergence analysis, while Subsection \ref{sect-3.2} presents the AFSSM, also with its convergence analysis.
 Section \ref{sect-incremental} is dedicated to the presentation of the IFSSM and its convergence analysis. 
 Finally, in Section \ref{sect-numerical}, we illustrate the performance and effectiveness of the proposed methods through three numerical examples.

\section{Preliminaries}\label{sect-2}
	
		In this section, we introduce fundamental concepts and properties that will be used in the following sections. 
	Throughout this paper, we assume that $\R^k$ is a Euclidean space with the inner product $\langle \cdot, \cdot \rangle$ and its induced norm $\|\cdot\|$. We denote by $\Id$ the identity operator on $\R^k$. The  strong convergence of $\{ {x^n}\} _{n =1}^\infty$ to $x\in\R^k$ is represented by ${x^n} \to x$. For an operator $T:\R^k\to\R^k$, $\fix T:=\{x\in\R^k:Tx=x\}$ and $\im T:=\{y\in\R^k:y=Tx,\exists x\in\R^k\}$ denote the set of fixed points and  images of $T$, respectively. 

We recall some nonlinear operators for later use; further details can be found in Cegielski’s book \cite[Chapters 2 and 3]{C12}. Let $C\subset\R^k$ and $x\in\R^k$ be given. If there exists a point $y\in C$ such that $\|y-x\|\le\|z-x\|$ for all $z\in C$, then $y$ is called the {\it metric projection} of $x$ onto $C$, denoted by $P_Cx$. If $C$ is a nonempty closed and convex subset, then for any $x\in\R^k$, $P_Cx$ exists and is uniquely defined (see, e.g., \cite[Theorem 1.2.3]{C12}).
    For an operator $T:\R^k\to\R^k$ 
  with $\fix T\ne\emptyset$, 
  $T$ is said to be {\it quasi-nonexpansive} (QNE) if 
	$\|Tx-z\|\le\|x-z\|$ for all $x\in\R^k$ and  $z\in\fix T$; {\it $\rho$-strongly quasi-nonexpansive} ($\rho$-SQNE), $\rho\ge0$, if 
	$\|Tx-z\|^2\le\|x-z\|^2-\rho\|Tx-x\|^2$; a {\it cutter} if 
$\langle z-Tx,x-Tx\rangle\le 0$.
Moreover, an operator $T:\R^k\to\R^k$ is said to be {\it nonexpansive} (NE) if 
 $\|Tx-Ty\|\le\|x-y\|$ for all $x,y\in\R^k$; {\it $\rho$-firmly nonexpansive} ($\rho$-FNE) \cite[Definition 2.1]{C06}, where $\rho\ge0$, if 
 $\|Tx-Ty\|^2\le\|x-y\|^2 - \rho\|(Tx-x)-(Ty-y)\|^2$; {\it firmly nonexpansive} (FNE) if 
 $\langle Tx-Ty,x-y\rangle\ge\|Tx-Ty\|^2$. 
 Clearly, every $\rho$-SQNE operator is QNE, and every $\rho$-FNE operator is $\rho$-SQNE. Furthermore, $T$ is a cutter if and only if it is 1-SQNE \cite[Theorem 2.1.39]{C12}, and since FNE implies 1-FNE, every FNE operator is a cutter and thus also NE. Notably, the metric projection is FNE, a cutter, and NE. For further details on these relationships, see Reich and Zalas \cite{RZ16}.
Now, we recall the demi-closedness principle.
 An operator $T:\R^k \to \R^k$ satisfies the {\it demi-closedness principle} if $T-\Id$ is demi-closed at $0$; that it, for any sequence $\{x^n\}_{n=1}^\infty\subset\R^k$, if $x^n\to x\in\R^k$ and $\|(T-\Id)x^n\| \to 0$, then $x\in\fix T$.
It is worth mentioning that if $T:\R^k\to\R^k$ is a nonexpansive operator with $\fix T\neq\emptyset$, then  $T$ satisfies the demi-closedness principle; see \cite[Lemma 3.2.5]{C12}.



Next, we recall the concepts of convexity, strong convexity, and subgradients. For more comprehensive details, we refer the reader to the books by Bauschke and Combettes \cite{BC17}, and Beck \cite{B17}.
A function $f:\R^k\to\R$  is said to be {\it convex} 
if 
$f(\alpha x+ (1-\alpha)y)\le \alpha f(x)+(1-\alpha)f(y)$ for all $x,y\in\R^k$ and $\alpha\in[0,1]$;
{\it $\rho$-strongly convex}, where $\rho>0$, if 
$f(\alpha x+ (1-\alpha)y)\le \alpha f(x)+(1-\alpha)f(y)-\frac{\rho}{2}\alpha(1-\alpha)\|x-y\|^2$; and 
{\it concave} if $-f$ is convex.
Furthermore, let $f:\R^k\to\R$ be a convex function. A vector $f'(x)\in\R^k$ is called a {\it subgradient} of $f$ at $x\in\R^k$ if the {\it subgradient inequality}  
$\langle f'(x),y-x\rangle \le f(y)-f(x)$ holds for all $y\in\R^k$.
Moreover, the set of all subgradients of  $f$ at $x$ is called the {\it subdifferential} of $f$ at $x$, and is defined as 
${\partial} f(x):=\{f'(x)\in\R^k: \langle f'(x),y-x\rangle \le f(y)-f(x) \quad  \text{for all $y\in\R^k$}\}.$

The following results are crucial to the convergence analysis.

 \begin{proposition} (\cite{BC17}, Proposition 16.20).\label{fact-BC17}
 Let $f:\R^k\to\R$ be a convex function. Then the following conditions hold:
\begin{itemize}
    \item[(i)]  $f$ is continuous.
    \item[(ii)]  $f$  is bounded on every bounded subset of $\R^k$. 
    \item[(iii)] $\partial f$ maps every bounded subset of $\R^k$ to a bounded set.  
\end{itemize}
\end{proposition}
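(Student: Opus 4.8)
The plan is to establish the three claims as a chain, each built on the previous, with the genuine work concentrated in the boundedness step. The single hardest point is showing that $f$ is bounded \emph{above} on bounded sets; once that is secured, the lower bound, continuity, and the subgradient estimate all follow from short convexity arguments.

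First I would prove the upper-bound half of (ii). Any bounded set $B$ is contained in the convex hull of finitely many points $v_0,\dots,v_m$ (for instance the vertices of a large cube). Writing an arbitrary $x\in B$ as a convex combination $x=\sum_i \lambda_i v_i$ with $\lambda_i\ge0$ and $\sum_i\lambda_i=1$, convexity gives $f(x)\le\sum_i\lambda_i f(v_i)\le\max_i f(v_i)$, so $f$ is bounded above on $B$ by a constant depending only on the vertices. For the lower bound I would center a ball $\overline{B}(c,r)\supseteq B$ and use the reflected point $2c-x$, which also lies in the ball: midpoint convexity $f(c)\le\tfrac12 f(x)+\tfrac12 f(2c-x)$ rearranges to $f(x)\ge 2f(c)-\sup_{\overline{B}(c,r)}f$, and the upper bound just established makes the right-hand side finite. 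This proves (ii).

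Next, for (i), I would upgrade the local two-sided bound to local Lipschitz continuity, which is strictly stronger than continuity. Fix $x_0$ and suppose $m\le f\le M$ on $\overline{B}(x_0,2r)$. For distinct $x,y\in \overline{B}(x_0,r)$, put $\alpha=\|x-y\|$ and $z=y+\tfrac{r}{\alpha}(y-x)\in \overline{B}(x_0,2r)$; since $y=\tfrac{r}{\alpha+r}x+\tfrac{\alpha}{\alpha+r}z$, convexity yields $f(y)-f(x)\le\tfrac{\alpha}{\alpha+r}\bigl(f(z)-f(x)\bigr)\le\tfrac{M-m}{r}\,\|x-y\|$. Swapping $x$ and $y$ gives the two-sided estimate $|f(x)-f(y)|\le\tfrac{M-m}{r}\|x-y\|$, so $f$ is Lipschitz on $\overline{B}(x_0,r)$ and hence continuous; applying this at every point proves (i).

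Finally, for (iii), given a bounded set $B\subseteq \overline{B}(0,r)$, part (ii) supplies a two-sided bound on $\overline{B}(0,r+1)$, and the estimate above then furnishes a constant $L$ with $f$ being $L$-Lipschitz on $\overline{B}(0,r+1)$. For $x\in B$ and any $u\in\partial f(x)$ with $u\neq0$, I would test the subgradient inequality at $y=x+u/\|u\|\in \overline{B}(0,r+1)$, obtaining $\|u\|=\langle u,y-x\rangle\le f(y)-f(x)\le L\|y-x\|=L$, so $\|u\|\le L$ for every subgradient at every point of $B$. As noted, the only real obstacle is the opening upper-bound step; everything afterward is routine convexity manipulation once local boundedness is in place.
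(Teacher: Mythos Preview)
Your proof is correct and complete; the chain (upper bound via vertices of a simplex/cube, lower bound via reflection, local Lipschitz continuity from two-sided boundedness, subgradient bound via the Lipschitz constant) is the standard route and every step is sound. Note, however, that the paper does not actually prove this proposition: it is stated as a preliminary fact and attributed to \cite[Proposition~16.20]{BC17} without argument, so there is no in-paper proof to compare against. Your write-up supplies exactly the self-contained elementary argument that the cited reference contains, so in that sense you match the intended source.
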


\begin{lemma}
(\cite{B17}, Theorem 5.24).
\label{lemma-strongconvex}
    Let $f:\R^k\to\R$ be a $\sigma$-strongly convex function with $\sigma>0$.
    Then,
     the following inequality holds:
    \begin{equation*}
      \sigma\|x-y\|^2 \le \langle f'(x) - f'(y), x-y\rangle,
    \end{equation*}
    for all $x,y\in\R^k$, where $f'(x)\in\partial f(x)$ and $f'(y)\in\partial f(y)$.
\end{lemma}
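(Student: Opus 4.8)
The plan is to reduce the statement to the symmetric addition of a strengthened subgradient inequality. The key intermediate step will be to show that for every $x,y\in\R^k$ and every $f'(x)\in\partial f(x)$ one has
\[
f(y) \ge f(x) + \langle f'(x), y-x\rangle + \frac{\sigma}{2}\|x-y\|^2. \tag{$\star$}
\]
This is the natural strengthening of the ordinary subgradient inequality that encodes the $\sigma$-strong convexity of $f$, and once it is available the conclusion follows by a short symmetric argument.

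To establish $(\star)$, I would fix $x,y$ and, for $t\in(0,1)$, apply the definition of $\sigma$-strong convexity to the convex combination $(1-t)x+ty = x+t(y-x)$, obtaining
\[
f\bigl(x+t(y-x)\bigr) \le (1-t)f(x)+tf(y) - \frac{\sigma}{2}(1-t)t\|x-y\|^2 .
\]
After subtracting $f(x)$ from both sides and dividing by $t>0$, the ordinary subgradient inequality applied at $x$ to the point $x+t(y-x)$ yields $\langle f'(x), y-x\rangle \le \bigl(f(x+t(y-x))-f(x)\bigr)/t$, which bounds the resulting difference quotient from below. Combining the two estimates gives $\langle f'(x), y-x\rangle \le f(y)-f(x)-\frac{\sigma}{2}(1-t)\|x-y\|^2$ for each $t\in(0,1)$, and letting $t\to 0^+$ eliminates the $t$-dependent factor and produces $(\star)$.

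With $(\star)$ in hand, I would write it once as stated and once with the roles of $x$ and $y$ interchanged (using $f'(y)\in\partial f(y)$), then add the two inequalities. The terms $f(x)$ and $f(y)$ cancel, leaving
\[
0 \ge \langle f'(x), y-x\rangle + \langle f'(y), x-y\rangle + \sigma\|x-y\|^2 ,
\]
and rewriting the two inner products as $-\langle f'(x)-f'(y), x-y\rangle$ rearranges this directly into the claimed inequality.

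The only delicate point is the passage to the limit $t\to 0^+$ in the derivation of $(\star)$: one must note that the left-hand side $\langle f'(x), y-x\rangle$ is independent of $t$, while the right-hand side converges to $f(y)-f(x)-\frac{\sigma}{2}\|x-y\|^2$, so the inequality is preserved in the limit. As an alternative that sidesteps the limiting argument, $(\star)$ also follows from the convexity of $h:=f-\frac{\sigma}{2}\|\cdot\|^2$ together with the relation $\partial f(x)=\partial h(x)+\sigma x$ for the smooth quadratic term; this route trades the limit for a justification of the subdifferential sum rule, so I expect the first-principles limiting argument to be the cleaner path here.
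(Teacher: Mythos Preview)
Your proof is correct. The paper does not supply its own proof of this lemma; it merely cites it as Theorem~5.24 of \cite{B17}. Your argument---establishing the strengthened subgradient inequality $(\star)$ from the definition of $\sigma$-strong convexity via a limiting argument, then symmetrizing---is the standard route and matches what one finds in Beck's book, so there is nothing to compare beyond noting that you have filled in what the paper leaves to the reference.
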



\begin{lemma}(\cite{M08}, Lemma 3.1).\label{paul}
		Let $\{ {a_n}\} _{n = 1}^\infty$ 
		be a nonnegative sequence such that there exists a subsequence $\{ {a_{n_j}}\} _{j = 1}^\infty$ of $\{ {a_n}\} _{n = 1}^\infty$ with $a_{n_j}< a_{n_{j+1}}$ for all $j\ge1$. Define, for all $n\ge n_0$, 
		${\nu(n)} = \max\left\{ {k\in\mathbb{N}: n_0\le k \le n, {a_k} < {a_{k + 1}}} \right\}$.
		 Then, $\{\nu(n)\}_{n\ge n_0}$ is a nondecreasing sequence such that $\mathop {\lim }\limits_{n \to \infty } {\nu(n)} = \infty$ and, for all $n\ge n_0$,
	$a_{\nu(n)}\le a_{\nu(n)+1}$ and $a_{n}\le a_{\nu(n)+1}$.
\end{lemma}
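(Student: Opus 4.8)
The plan is to work entirely from the combinatorial definition of $\nu(n)$ and the structure of the ascent set $A := \{k : a_k < a_{k+1}\}$. The crucial preliminary fact is that $A$ is infinite. Note the hypothesis only supplies a subsequence with $a_{n_j} < a_{n_{j+1}}$, not consecutive ascents, so I would argue blockwise by contradiction: if $a_k \ge a_{k+1}$ held for every $k$ with $n_j \le k < n_{j+1}$, then $a_{n_j} \ge a_{n_j+1} \ge \cdots \ge a_{n_{j+1}}$, contradicting $a_{n_j} < a_{n_{j+1}}$. Hence each block $[n_j, n_{j+1})$ contains at least one ascent index, and since there are infinitely many blocks, $A$ is infinite. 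This guarantees that for all sufficiently large $n$ the set defining $\nu(n)$ is nonempty, so $\nu(n)$ is well defined, and it will also drive the divergence claim.

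Next I would dispatch the two monotonicity-type assertions. Nondecreasingness of $\nu$ is immediate, since $\{k : n_0 \le k \le n,\ a_k < a_{k+1}\}$ is nested increasing in $n$ and the maximum of a nested increasing family of (nonempty) sets is nondecreasing. For $\lim_{n\to\infty}\nu(n) = \infty$, I would invoke the infinitude of $A$: given any $N$, choose an ascent index $k > N$; then for every $n \ge k$ we have $\nu(n) \ge k > N$, so $\nu(n) \to \infty$.

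The inequality $a_{\nu(n)} \le a_{\nu(n)+1}$ is essentially free, because $\nu(n)$ is by construction an ascent index, so in fact $a_{\nu(n)} < a_{\nu(n)+1}$. The genuinely delicate point, and the step I expect to be the main obstacle, is $a_n \le a_{\nu(n)+1}$. Here I would split on whether $\nu(n) = n$ or $\nu(n) < n$. If $\nu(n) = n$, then $a_{\nu(n)+1} = a_{n+1} > a_n$ and we are done. If $\nu(n) < n$, then by maximality of $\nu(n)$ no index $k$ with $\nu(n) < k \le n$ is an ascent index, i.e. $a_k \ge a_{k+1}$ for all such $k$; chaining these descents gives $a_{\nu(n)+1} \ge a_{\nu(n)+2} \ge \cdots \ge a_n$, whence $a_n \le a_{\nu(n)+1}$. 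Getting this chain right hinges on reading the maximality of $\nu(n)$ correctly as asserting that the window strictly to the right of $\nu(n)$ up to $n$ consists only of descents, which is the crux of the whole argument.
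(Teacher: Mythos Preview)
The paper does not actually prove this lemma; it is quoted verbatim from Maing\'e \cite{M08} and used as a black box in the convergence proofs, so there is no in-paper argument to compare against. Your proof is correct and is essentially the standard one. One remark: the lemma as stated in the paper has $a_{n_j} < a_{n_{j+1}}$ (subsequence indices), whereas every application in the paper (Theorems \ref{main-thm-bdd}, \ref{2-main-thm}, \ref{incre-main-thm}, and Lemma \ref{2-lm-1}) invokes it under the stronger hypothesis $a_{n_k} < a_{n_k+1}$ (consecutive indices), which is also the form in Maing\'e's original. You noticed the gap and supplied the extra blockwise argument to pass from the weaker subsequence hypothesis to the infinitude of the ascent set $A$; this step is correct and makes your proof cover the statement exactly as written, but it would be unnecessary under the intended (and applied) hypothesis.
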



  \section{Fixed-Point Subgradient Splitting Method}\label{algorithm 12}

In this section, we present an algorithm to solve Problem (\ref{main-FP}). Before doing so, we first introduce the necessary notations. Let $f^\prime(x)$ and $g^\prime(x)$ denote the subgradients of $f$ and $g$ at $x\in\R^k$, respectively. For convenience, we define $h^\prime(x):= (-g)^\prime(x)$, which represents the subgradient of $-g$ at $x$.
We now present the following method to solve Problem (\ref{main-FP}):
	
		\vskip2mm
		\begin{algorithm}[H]
			\SetAlgoLined
			\vskip2mm
			\textbf{Initialization}: Choose a stepsize sequence $\{\eta_n\}_{n=1}^\infty\subset(0,\infty)$ and take an initial point $x^1\in\R^k$  with $g(x_1)>0$. Set $\theta_1=\frac{f(x^1)}{g(x^1)}$.

\textbf{Iterative step}: For a current iterate $x^n\in \im T$ ($n\in\N$), calculate as follows:

\vspace{0.3cm}

\textbf{Step 1.} Compute $\theta_n=\frac{f(x^n)}{g(x^n)}.$ 
\vspace{0.3cm}

\textbf{Step 2.} Compute subgradients $f^\prime (x^n)\in\partial f(x^n)$ and $h^\prime (x^n)\in\partial (-g)(x^n)$ and calculate the next iterate $x^{n+1}\in \im T$ as 
			\begin{equation*}
    x^{n+1}=T(x^n-\eta_n f^\prime (x^n) - \eta_n\theta_n h^\prime (x^n)).
\end{equation*}
Update $n:=n+1$ and go to \textbf{Step 1.}
			\caption{Fixed-Point Subgradient Splitting Method (FSSM)}
			\label{main-algor}
			\vskip2mm
			
		\end{algorithm}
		
		\vskip2mm
		

Before proceeding with the convergence analysis, we make some significant remarks.

  	\begin{remark}  Some significant remarks are in order:
  	\begin{itemize}

    \item[(i)]  One challenging aspect of the FSSM,  which sets it apart from previous works beyond the use of the main operator, is that we cannot guarantee that  $\theta^*\le\theta_n$ for all $n\in\N$. This is due to the fact that in the FSSM, $x^*\in\fix T$ while the generated sequences $x^n$ need not belong in the constraint $\fix T$.
   Therefore, $\theta^*$ and $\theta_n$  are incomparable in this context.
  	    \item[(ii)] One can observe that in a special case where $T:=P_C$, the metric projection onto a nonempty closed and convex subset, the FSSM is related to the APGM introduced very recently in Prangprakhon et al. \cite{PFN22}. 
  	\end{itemize}
  		
  	\end{remark}


\subsection{General Convergence}\label{sect-3.1}

In this section, we study the general convergence of the FSSM. Let $\{x^n\} _{n = 1}^\infty$ be a sequence generated by the FSSM, and for simplicity, define
\begin{equation*}
    y^n:=x^n-\eta_n f^\prime (x^n) - \eta_n\theta_n h^\prime (x^n), \quad \text{for all $n\in\N$.}
\end{equation*}

We begin with the following fundamental lemma, which is useful in the convergence analysis.
     \begin{lemma}\label{lm-1-bdd} 
    Let $\{x^n\} _{n = 1}^\infty$ be a sequence generated by the FSSM.  If  $\{x^n\}_{n=1}^\infty$ is bounded, then for any $z\in \fix T$, the following statement hold:
 \begin{equation*}
     \|x^{n+1}-z\|^2 \le \|x^n-z\|^2 + 2g(z)\eta_n\left(\frac{f(z)}{g(z)}-\theta_n\right) + B\eta_n^2 - \rho\|Ty^n - y^n\|^2,
\end{equation*} 
    where $B:= \sup\limits_{n\ge 1}\|f^\prime (x^n) +\theta_n h^\prime (x^n)\|^2$.		
    \end{lemma}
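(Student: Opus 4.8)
The plan is to derive the recursive estimate by feeding two subgradient inequalities into the defining inequality of the $\rho$-SQNE operator $T$, with the whole computation driven by the identity $f(x^n)=\theta_n g(x^n)$. First I note that $z\in\fix T$ gives $z=Tz\in\im T$, so assumption (A3) yields $g(z)>0$ and the quantity $\frac{f(z)}{g(z)}$ is well defined. Since $x^{n+1}=Ty^n$ and $T$ is $\rho$-SQNE, applying its defining inequality at the point $y^n$ with the fixed point $z$ gives
\[
\|x^{n+1}-z\|^2=\|Ty^n-z\|^2\le\|y^n-z\|^2-\rho\|Ty^n-y^n\|^2 .
\]
This already isolates the term $-\rho\|Ty^n-y^n\|^2$, so it remains only to bound $\|y^n-z\|^2$ by the first three terms of the claimed right-hand side.

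For that, I would write $w^n:=f'(x^n)+\theta_n h'(x^n)$, so that $y^n=x^n-\eta_n w^n$, and expand
\[
\|y^n-z\|^2=\|x^n-z\|^2-2\eta_n\langle w^n,x^n-z\rangle+\eta_n^2\|w^n\|^2 .
\]
The last term is controlled by $\|w^n\|^2\le B$ directly from the definition of $B$, producing the $B\eta_n^2$ contribution. For the inner product I would apply the subgradient inequality for the convex function $f$ at $x^n$ (taking $y=z$) to get $\langle f'(x^n),x^n-z\rangle\ge f(x^n)-f(z)$, and the subgradient inequality for the convex function $-g$ at $x^n$ to get $\langle h'(x^n),x^n-z\rangle\ge g(z)-g(x^n)$. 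Since $x^n\in\im T$, assumptions (A2)–(A3) give $f(x^n)\ge 0$ and $g(x^n)>0$, hence $\theta_n\ge 0$, so multiplying the second inequality by $\theta_n$ preserves its direction; adding the two yields
\[
\langle w^n,x^n-z\rangle\ge f(x^n)-f(z)+\theta_n g(z)-\theta_n g(x^n).
\]

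The crux is the next substitution: by the definition $\theta_n=\frac{f(x^n)}{g(x^n)}$ we have $\theta_n g(x^n)=f(x^n)$, so the terms $f(x^n)$ and $-\theta_n g(x^n)$ cancel, leaving $\langle w^n,x^n-z\rangle\ge \theta_n g(z)-f(z)$. Because $\eta_n>0$, this gives $-2\eta_n\langle w^n,x^n-z\rangle\le 2\eta_n\bigl(f(z)-\theta_n g(z)\bigr)=2g(z)\eta_n\bigl(\tfrac{f(z)}{g(z)}-\theta_n\bigr)$. Substituting this and the bound $\|w^n\|^2\le B$ into the expansion of $\|y^n-z\|^2$, and then into the SQNE inequality from the first step, produces exactly the asserted estimate.

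The computation itself is routine once this cancellation is spotted, and the single genuine idea is precisely that the definition of $\theta_n$ collapses the two separate subgradient estimates into the single comparison term $\frac{f(z)}{g(z)}-\theta_n$. The role of the boundedness hypothesis is not in deriving the inequality (the bound $\|w^n\|^2\le B$ is definitional) but in guaranteeing that $B<\infty$, so the estimate is nonvacuous: by Proposition \ref{fact-BC17}(iii) the subdifferentials $\partial f$ and $\partial(-g)$ map the bounded set $\{x^n\}$ to bounded sets, and by Proposition \ref{fact-BC17}(ii) the values $f(x^n)$ are bounded. I expect the one delicate point to be the boundedness of $\{\theta_n\}$, which additionally requires that $g(x^n)$ stay bounded away from $0$ along the iterates; this is the part I would check most carefully when assembling the finiteness of $B$.
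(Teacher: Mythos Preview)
Your proposal is correct and follows essentially the same approach as the paper's proof: apply the $\rho$-SQNE inequality, expand $\|y^n-z\|^2$, use the subgradient inequalities for $f$ and $-g$, and exploit the cancellation $\theta_n g(x^n)=f(x^n)$. You are in fact slightly more careful than the paper in explicitly noting that $\theta_n\ge 0$ is needed to preserve the direction when multiplying the $-g$ subgradient inequality by $\theta_n$, and in flagging that the finiteness of $B$ depends on $g(x^n)$ staying bounded away from zero along the iterates---a point the paper does not address.
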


   \begin{proof} 
Let $z\in\fix T$ and suppose that  $\{x^n\} _{n = 1}^\infty$ is bounded. 
We set $B:= \sup\limits_{n\ge 1}\|f^\prime (x^n) +\theta_n h^\prime (x^n)\|^2$.
From the $\rho$-strong quasi-nonexpansivity of $T$, we obtain
\begin{eqnarray*}\label{eq1}
    \|x^{n+1}-z\|^2
    &\le& \|x^n-\eta_n f^\prime (x^n) - \eta_n\theta_n h^\prime (x^n)-z\|^2 - \rho\|Ty^n -y^n\|^2\nonumber\\
    &=&\|x^n-z\|^2 + \eta_n^2\|f^\prime(x^n)+\theta_n h^\prime(x^n)\|^2 \nonumber\\
    && - 2\eta_n\langle x^n-z, f^\prime(x^n)+\theta_n h^\prime(x^n)\rangle - \rho\|Ty^n -y^n\|^2\nonumber\\
&\le&\|x^n-z\|^2 - 2\eta_n\langle x^n-z, f^\prime(x^n)\rangle - 2\eta_n\theta_n\langle x^n-z, h^\prime(x^n)\rangle \nonumber\\
    &&+ B\eta_n^2  - \rho\|Ty^n -y^n\|^2.
     \end{eqnarray*}
Since $g$ is concave, $-g$ is convex. The subgradient inequalities of $f$ and $-g$ yield that
\begin{eqnarray*}\label{eq2}
    \|x^{n+1}-z\|
    &\le& \|x^n-z\|^2 + 2\eta_n(f(z)-\theta_ng(z)) + 2\eta_n(\theta_ng(x^n)-f(x^n)) \nonumber\\
    && +B\eta_n^2  - \rho\|Ty^n -y^n\|^2\nonumber\\
    &=&\|x^n-z\|^2 + 2\eta_ng(z)\left(\frac{f(z)}{g(z)}-\theta_n\right) + B\eta_n^2  - \rho\|Ty^n -y^n\|^2,
\end{eqnarray*}
which completes the proof.
   \end{proof}

We first show the convergence behavior of the sequence of objective function values as follows.
    \begin{theorem}\label{thm-1-obj} 
    Let $\{x^n\} _{n = 1}^\infty$ be a sequence generated by the FSSM.  If  $\{x^n\}_{n=1}^\infty$ is bounded, then for any $z\in \fix T$, the following statements hold:
\begin{itemize}
    \item[(i)] If $\eta_n=\eta_0$, for all $n\in\N$, where $\eta_0>0$, then, for each $K\in\N$, the following inequality holds:
\begin{equation*}
\min\limits_{1\le n \le K}\theta_n - \frac{f(z)}{g(z)} \le \frac{\|x^1-z\|^2}{2g(z)\eta_0 K}+\frac{\eta_0B}{2g(z)},
\end{equation*}
    where $B$ is given in Lemma \ref{lm-1-bdd}.
   \item[(ii)] If  $\sum\limits_{n=1}^{\infty}\eta_n=\infty$ and $\sum\limits_{n=1}^{\infty}\eta_n^2 < \infty$, then
$$\liminf\limits_{n\rightarrow\infty}\theta_n \le \frac{f(z)}{g(z)}.$$ 
\item[(iii)] If $\eta_n=\frac{1}{(n+1)^p}$, for all $n\in\N$, where $p>0.5$, then, for each $K\in\N$, the following inequality holds:
\begin{equation*}
\min\limits_{1\le n \le K}\theta_n - \frac{f(z)}{g(z)} \le 
 \mathcal{O}\left( \frac{1}{K^{1-p}} \right).
\end{equation*}
\end{itemize}		
    \end{theorem}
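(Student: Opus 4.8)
The plan is to derive all three parts from the single inequality in Lemma \ref{lm-1-bdd}. First I would discard the nonpositive term $-\rho\|Ty^n-y^n\|^2$ (using $\rho>0$ and $\|Ty^n-y^n\|^2\ge 0$) and rearrange to isolate the quantity of interest, obtaining the cleaner per-step recursion
\begin{equation*}
2g(z)\eta_n\left(\theta_n-\frac{f(z)}{g(z)}\right)\le \|x^n-z\|^2-\|x^{n+1}-z\|^2+B\eta_n^2.
\end{equation*}
Since $z\in\fix T$ gives $z=Tz\in\im T$, assumption (A3) guarantees $g(z)>0$, so dividing by $g(z)$ is legitimate throughout. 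Summing over $n=1,\dots,K$ makes the distance terms telescope, and after dropping the remaining $-\|x^{K+1}-z\|^2\le 0$ I arrive at the master estimate
\begin{equation*}
2g(z)\sum_{n=1}^{K}\eta_n\left(\theta_n-\frac{f(z)}{g(z)}\right)\le \|x^1-z\|^2+B\sum_{n=1}^{K}\eta_n^2.
\end{equation*}

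For part (i), with $\eta_n\equiv\eta_0$, I would lower-bound the weighted sum on the left using $\theta_n-\frac{f(z)}{g(z)}\ge\min_{1\le n\le K}\theta_n-\frac{f(z)}{g(z)}$ together with $\eta_0>0$, giving $2g(z)\eta_0 K\bigl(\min_{1\le n\le K}\theta_n-\frac{f(z)}{g(z)}\bigr)\le\|x^1-z\|^2+B\eta_0^2 K$; dividing by $2g(z)\eta_0 K$ yields exactly the claimed bound. For part (iii) the identical lower bound works with variable weights, producing
\begin{equation*}
\min_{1\le n\le K}\theta_n-\frac{f(z)}{g(z)}\le\frac{\|x^1-z\|^2+B\sum_{n=1}^{K}\eta_n^2}{2g(z)\sum_{n=1}^{K}\eta_n},
\end{equation*}
after which it remains to estimate the two partial sums for $\eta_n=(n+1)^{-p}$. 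Here $p>0.5$ forces $2p>1$, so $\sum_{n=1}^{K}\eta_n^2$ converges and the numerator stays bounded, while an integral comparison gives $\sum_{n=1}^{K}(n+1)^{-p}=\Theta(K^{1-p})$ in the sharp regime $0.5<p<1$; combining these produces the $\mathcal{O}(K^{p-1})$ rate.

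For part (ii) I would argue by contradiction: if $\liminf_{n\to\infty}\theta_n>\frac{f(z)}{g(z)}$, there exist $\varepsilon>0$ and $N$ with $\theta_n-\frac{f(z)}{g(z)}\ge\varepsilon$ for all $n\ge N$. Summing the per-step inequality from $N$ to $K$ and telescoping gives $2g(z)\varepsilon\sum_{n=N}^{K}\eta_n\le\|x^N-z\|^2+B\sum_{n=N}^{K}\eta_n^2$; letting $K\to\infty$, the left side diverges since $\sum_{n=1}^\infty\eta_n=\infty$, while the right side stays finite since $\sum_{n=1}^\infty\eta_n^2<\infty$, a contradiction.

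The main obstacle I anticipate is the sign bookkeeping in the min-based lower bound combined with the asymptotic estimate in (iii): one must keep the weights $\eta_n>0$ and verify that $\min_{1\le n\le K}\theta_n-\frac{f(z)}{g(z)}$ may be pulled out of the weighted sum even when it is negative, and then secure a lower bound of order $K^{1-p}$ on $\sum_{n=1}^{K}(n+1)^{-p}$ via integral comparison. The telescoping and the contradiction argument are routine once the master estimate is established.
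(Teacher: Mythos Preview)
Your proposal is correct and follows essentially the same route as the paper: discard the $-\rho\|Ty^n-y^n\|^2$ term in Lemma~\ref{lm-1-bdd}, telescope to get the master estimate, then use the $\min$ lower bound for (i) and (iii) (with integral comparison for the step-size sums in (iii)) and a contradiction argument for (ii). Your added remarks that $z\in\fix T\subset\im T$ ensures $g(z)>0$, and that the rate in (iii) is meaningful only in the regime $0.5<p<1$, are in fact slightly more careful than the paper's own proof.
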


   \begin{proof} 
Let $z\in\fix T$ and suppose that  $\{x^n\} _{n = 1}^\infty$ is bounded. 
First, by omitting the nonpositive term $\rho\|Ty^n -y^n\|^2$ and summing inequality in Lemma \ref{lm-1-bdd} from $n=1$ to  a fixed $K$, we have that
    \begin{eqnarray}\label{eqn-i}
  	2g(z)\sum\limits_{n = 1}^K\eta_n\left(\theta_n - \frac{f(z)}{g(z)}\right)
&\le& \|x^{1}-z\|^2 +B\sum\limits_{n = 1}^K \eta_n^2. 
   \end{eqnarray}

(i) Let $\eta_0>0$ be given. By putting $\eta_n=\eta_0$ in (\ref{eqn-i}), we have 
 \begin{eqnarray*}
  	2g(z)K\eta_0\min\limits_{1\leq n\leq K}\left(\theta_n - \frac{f(z)}{g(z)}\right)
&\le& \|x^{1}-z\|^2 +\eta_0^2BK, 
   \end{eqnarray*}
   which proves (i).

    We next prove (ii). By approaching $K$ to infinity in (\ref{eqn-i}), we obtain
    \begin{eqnarray*}
  	2g(z)\sum\limits_{n = 1}^\infty\eta_n\left(\theta_n - \frac{f(z)}{g(z)}\right)
&\le& \|x^{1}-z\|^2 +B\sum\limits_{n = 1}^\infty \eta_n^2, 
   \end{eqnarray*}
   which implies that 
    $\sum\limits_{n =1}^\infty\eta_n\left(\theta_n - \frac{f(z)}{g(z)}\right)< \infty.$
   Next, we shall show that $\liminf\limits_{n\rightarrow\infty}\theta_n \le \frac{f(z)}{g(z)}$ by contradiction. Assume that there is an $\varepsilon>0$ and $N\in\N$ such that $\theta_n-\frac{f(z)}{g(z)}\ge\varepsilon$ for all $n\ge N$. Since $\{\eta_n\}_{n=1}^\infty\subset(0,\infty)$ and $\sum\limits_{n=1}^{\infty}\eta_n=\infty$, we thus have
   \begin{equation*}
       \infty = \varepsilon\sum\limits_{n = 1}^\infty\eta_n \le \sum\limits_{n =1}^\infty\eta_n\left(\theta_n - \frac{f(z)}{g(z)}\right)< \infty,
   \end{equation*}
 which brings a contradiction. Then, the result of (ii) follows 
 immediately. 
 
 Next we prove (iii). By putting $\eta_n=\frac{1}{(n+1)^p}$ in (\ref{eqn-i}), we have 
\begin{equation}\label{remark-rate-1}
\min\limits_{1\le n \le K}\theta_n - \frac{f(z)}{g(z)} \le \frac{\frac{1}{2g(z)}\|x^{1}-z\|^2 + \frac{B}{2g(z)}\sum\limits_{n = 1}^K \frac{1}{(n+1)^{2p}}}{\sum\limits_{n = 1}^K\frac{1}{(n+1)^p}}.
\end{equation}
Due to \cite[Lemma 8.27]{B17}, we obtain the following inequalities: 
\begin{equation*}
  \sum\limits_{n = 1}^K \frac{1}{(n+1)^{2p}}\le\int_0^K \frac{1}{(t+1)^{2p}} \,dt = \frac{(K+1)^{1-2p}-1}{1-2p} \le \frac{1}{2p - 1},
  \end{equation*}
  and
    \begin{equation*}
  \sum\limits_{n = 1}^K \frac{1}{(n+1)^{p}}\ge\int_1^{K+1} \frac{1}{(t+1)^p} \,dt = \frac{(K+2)^{1-p} - 2^{1-p}}{1-p}, 
  \end{equation*}
  for all $K\in\N$. Furthermore, let us note further that 
\begin{equation*}
     \frac{(K+2)^{1-p} - 2^{1-p}}{1-p} = 
     \frac{\left[ \frac{(K+2)^{1-p} - 2^{1-p}}{(K+3)^{1-p}} \right] (K+3)^{1-p}}{1-p}
\ge \frac{\left( \frac{3}{4} \right)^{1-p} - \left( \frac{1}{2} \right)^{1-p}}{(1-p)(K+3)^{p-1}}.
\end{equation*}
Hence, the relation (\ref{remark-rate-1}) becomes 
\begin{equation*}
\min\limits_{1\le n \le K}\theta_n - \frac{f(z)}{g(z)} \le 
\left[ \frac{\frac{(1-p)}{2g(z)}\|x^{1}-z\|^2 + \frac{B(1-p)}{(2g(z))(2p - 1)}}{\left( \frac{3}{4} \right)^{1-p} - \left( \frac{1}{2} \right)^{1-p}} \right] \cdot\ (K+3)^{p-1} \le \mathcal{O}\left( \frac{1}{K^{1-p}} \right).
\end{equation*} The proof is complete.  
   \end{proof}

		
	The following is the convergence theorem of the FSSM in the setting of Problem (\ref{main-FP}).
		
	
	    \begin{theorem}\label{main-thm-bdd}
			Let $\{x^n\} _{n = 1}^\infty$ be a sequence generated by the FSSM. Suppose that $\sum\limits_{n=1}^{\infty}\eta_n=\infty$ and $\sum\limits_{n=1}^{\infty}\eta_n^2 < \infty$.
			 If $\{x^n\} _{n = 1}^\infty$ is bounded and
             the operator $T$ satisfies the demi-closedness principle, then there exists a subsequence of the sequence $\{x^n\}_{n=1}^\infty$ that converges to a point in $\mathcal{S}$.
		\end{theorem}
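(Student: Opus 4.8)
The plan is to combine the descent-type estimate of Lemma~\ref{lm-1-bdd} with the objective-value bound of Theorem~\ref{thm-1-obj}(ii), isolate a subsequence of $\{x^n\}$ along which the operator residual $\|Ty^n-y^n\|$ vanishes, then invoke the demi-closedness principle to place the limit inside $\fix T$ and use continuity of $f/g$ together with the definition of $\theta^*$ to force it into $\mathcal{S}$. First I would record the cheap preliminaries: $\sum_n\eta_n^2<\infty$ gives $\eta_n\to0$; boundedness of $\{x^n\}$ together with Proposition~\ref{fact-BC17} and $g(x^n)>0$ gives that $\{\theta_n\}$ is bounded (this is exactly what makes the constant $B$ finite); and $\|y^n-x^n\|=\eta_n\|f'(x^n)+\theta_n h'(x^n)\|\le\sqrt{B}\,\eta_n\to0$, so $\{x^n\}$ and $\{y^n\}$ share the same cluster points and $\|Ty^n-y^n\|=\|x^{n+1}-y^n\|$.

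Fixing $x^*\in\mathcal{S}$ and writing $a_n:=\|x^n-x^*\|^2$, Lemma~\ref{lm-1-bdd} with $z=x^*$ (so $f(z)/g(z)=\theta^*$) reads
\begin{equation*}
a_{n+1}\le a_n+2g(x^*)\eta_n(\theta^*-\theta_n)+B\eta_n^2-\rho\|Ty^n-y^n\|^2. \tag{$\star$}
\end{equation*}
The central difficulty, and the reason a plain Fej\'er-monotonicity argument fails, is precisely Remark~(i): since $x^n\notin\fix T$ in general, the sign of $\theta^*-\theta_n$ is uncontrolled, the middle term of $(\star)$ is indefinite, and $\{a_n\}$ need not be monotone. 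I would therefore split into the two standard cases governed by Maing\'e's Lemma~\ref{paul}.

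In the first case, where $\{a_n\}$ is eventually nonincreasing, $a_n$ converges and $a_n-a_{n+1}\to0$; rearranging $(\star)$ and using $\eta_n\to0$ with $\{\theta_n\}$ bounded gives $\rho\|Ty^n-y^n\|^2\le a_n-a_{n+1}+2g(x^*)\eta_n(\theta^*-\theta_n)+B\eta_n^2\to0$, so $\|Ty^n-y^n\|\to0$ for the whole sequence. In the second case I apply Lemma~\ref{paul} to $\{a_n\}$ to obtain the index map $\nu(n)\to\infty$ with $a_{\nu(n)}\le a_{\nu(n)+1}$; feeding $a_{\nu(n)+1}-a_{\nu(n)}\ge0$ into $(\star)$ at index $\nu(n)$ and discarding the nonpositive residual yields $\theta_{\nu(n)}\le\theta^*+\tfrac{B}{2g(x^*)}\eta_{\nu(n)}$, hence $\limsup_n\theta_{\nu(n)}\le\theta^*$, while discarding instead the (now nonnegative) difference term gives $\rho\|Ty^{\nu(n)}-y^{\nu(n)}\|^2\le 2g(x^*)\eta_{\nu(n)}(\theta^*-\theta_{\nu(n)})+B\eta_{\nu(n)}^2\to0$.

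Finally, in either case I extract from the relevant bounded subsequence ($\{x^n\}$ in case one, $\{x^{\nu(n)}\}$ in case two) a convergent sub-subsequence $x^{n_j}\to\bar x$; since $\|y^n-x^n\|\to0$ we also have $y^{n_j}\to\bar x$, and as $\|(T-\Id)y^{n_j}\|\to0$ the demi-closedness principle gives $\bar x\in\fix T\subseteq\im T$, so $g(\bar x)>0$ and $\theta_{n_j}=f(x^{n_j})/g(x^{n_j})\to f(\bar x)/g(\bar x)$ by continuity. Choosing in case one the subsequence realizing $\liminf_n\theta_n\le\theta^*$ from Theorem~\ref{thm-1-obj}(ii), and in case two using $\limsup_n\theta_{\nu(n)}\le\theta^*$, I conclude $f(\bar x)/g(\bar x)\le\theta^*$; but $\bar x\in\fix T$ forces $f(\bar x)/g(\bar x)\ge\theta^*$ by the definition of $\theta^*$, whence $\bar x\in\mathcal{S}$. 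The main obstacle throughout is the indefinite sign of $\theta^*-\theta_n$, resolved by the case split; a secondary technical point is justifying that $\{\theta_n\}$ is bounded, which rests on $g$ remaining positive on the bounded range of the iterates.
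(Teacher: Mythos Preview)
Your proposal is correct and follows essentially the same route as the paper: the Maing\'e two-case split applied to $\|x^n-x^*\|^2$, the estimate of Lemma~\ref{lm-1-bdd} to force $\|Ty^n-y^n\|\to0$ along the relevant (sub)sequence, Theorem~\ref{thm-1-obj}(ii) (Case~1) or the direct bound $\theta_{\nu(n)}\le\theta^*+\tfrac{B}{2g(x^*)}\eta_{\nu(n)}$ (Case~2), and then demi-closedness plus continuity of $f/g$. Two cosmetic differences only: the paper phrases the dichotomy as ``nonincreasing for all $z\in\fix T$'' versus ``there exists $z$ for which it is not'', whereas you fix a single $x^*\in\mathcal{S}$ from the outset (which suffices and is slightly cleaner); and in Case~1 the paper goes a step further, invoking Opial's condition to show that the entire subsequence realizing the $\liminf$ converges, not just a sub-subsequence---but since the theorem only claims existence of a convergent subsequence in $\mathcal{S}$, your argument already delivers the stated conclusion.
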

		
		\begin{proof}
			Let $z\in\fix T$.
				Suppose that  $\{x^n\} _{n = 1}^\infty$ is bounded and that $T$ satisfies the demi-closedness principle. 
 			To prove the theorem’s convergence, we consider two cases based on the behavior of the sequence $\{\|x^n-z\|^2\}_{n=1}^\infty$:
			
	\indent\textbf{Case 1.} Assume that there is an ${n_0}\in\mathbb{N}$ such that $\{\|x^n-z\|^2\}_{n\geq n_0}$ is nonincreasing for all $z\in \fix T$. 
    Then, $\mathop {\lim }\limits_{n \to \infty }{\|x^n-z\|^2}$ exists. 
   Define $B_1:=\sup\limits_{n\ge 1}\left|\frac{f(z)}{g(z)} - \theta_n\right|$. By Lemma \ref{lm-1-bdd}, we obtain  
\begin{equation*}
   \rho\limsup\limits_{n\rightarrow\infty}\|Ty^n - y^n\|
  \le\lim\limits_{n\rightarrow\infty}\|x^{n}-z\|^2 - \lim\limits_{n\rightarrow\infty}\|x^{n+1}-z\|^2 + 2g(z)B_1\lim\limits_{n\rightarrow\infty}\eta_n + B\lim\limits_{n\rightarrow\infty}\eta_n^2.
\end{equation*}
Since $\sum\limits_{n=1}^{\infty}\eta_n^2 < \infty$, $\rho>0$ and $g(z)>0$, we deduce that 
\begin{equation}\label{eq5}
\lim\limits_{n\rightarrow\infty}\|Ty^n-y^n\|=0.
\end{equation}
Due to the boundedness of $\{x^n\}_{n=1}^\infty$ and Theorem \ref{thm-1-obj}(ii), there exists a subsequence $\{x^{n_k}\}_{k=1}^\infty$ of $\{x^n\}_{n=1}^\infty$ such that 
   \begin{equation}\label{eq6}
\lim\limits_{k\rightarrow\infty} \frac{f(x^{n_k})}{g(x^{n_k})} = \liminf\limits_{n\rightarrow\infty}\frac{f(x^n)}{g(x^n)} \le \frac{f(z)}{g(z)},
\end{equation}
for all $z\in \fix T$. Additionally, the boundedness of $\{y^{n_k}\}_{k=1}^\infty$ implies that there exists a subsequence $\{y^{n_{k_j}}\}_{j=1}^\infty$ of $\{y^{n_k}\}_{k=1}^\infty$ such that $y^{n_{k_j}}\to x^*\in\R^k.$ 
From this and the relation (\ref{eq5}), the demi-closedness principle of $T$ implies that $x^*\in\fix T.$ 
According to the fact that $\eta_n\to0$
and the boundedness of $\{f^\prime(x^{n_{k_j}})+\theta_{n_{k_j}}h^\prime(x^{n_{k_j}})\}_{j=1}^\infty$, it holds that 
 $\|x^{n_{k_j}}-y^{n_{k_j}}\|\to0$.
Since $y^{n_{k_j}}\to x^*$, we thus have $x^{n_{k_j}}\to x^*$.
Moreover, since the functions $f$ and $-g$ are both convex, the function $\frac{f}{g}$ is continuous. 
By these and the relation (\ref{eq6}), we obtain that
\begin{equation*}
\frac{f(x^*)}{g(x^*)}=\lim\limits_{j\rightarrow\infty} \frac{f(x^{n_{k_j}})}{g(x^{n_{k_j}})} \le \frac{f(z)}{g(z)},
\end{equation*}
for all $z\in \fix T$. That is, $x^*\in \mathcal{S}.$ Finally, we shall prove that $x^{n_k}\to x^*\in \mathcal{S}$. Since $\{x^{n_k}\}_{k=1}^\infty$ is bounded, it is enough to prove that there is no subsequence $\{x^{n_{k_l}}\}_{l=1}^\infty$ of  $\{x^{n_k}\}_{k=1}^\infty$  such that $\mathop {\lim }\limits_{l \to \infty }x^{n_{k_l}}=\bar{x}\in \mathcal{S}$ and $\bar{x}\ne x^*.$  Assuming that the aforementioned statement is not true, the existence of the limit of $\|x^n-z\|$ ($z\in\fix T$) and Opial's theorem  yield that
\begin{eqnarray*}
	\lim\limits_{n\to\infty}\|x^n-x^*\| =
 \lim\limits_{j\to\infty}\|x^{n_{k_j}}-x^*\|&<&\lim\limits_{j\to\infty}\|x^{n_{k_j}}-\bar{x}\|\\
 &=& \lim\limits_{k\to\infty}\|x^{n_{k}}-\bar{x}\|\\
 &=& \lim\limits_{l\to\infty}\|x^{n_{k_l}}-\bar{x}\|\\
 &<&\lim\limits_{l\to\infty}\|x^{n_{k_l}}-x^*\| = \lim\limits_{n\to\infty}\|x^n-x^*\|.
\end{eqnarray*}
This brings a contradiction. Therefore, there exists a subsequence of $\{x^n\}_{n=1}^\infty$ that converges to a point in the solution set $\mathcal{S}$.

		
			\textbf{Case 2.}
		Assume that there is a point $z\in \fix T$ and a subsequence $\{x^{n_k}\}_{k=1}^\infty$ of $\{x^n\}_{n=1}^\infty$ such that $\|x^{n_k}-z\|^2<\|x^{n_k+1}-z\|^2$ for all $k\in\N$.
    Let $\{\nu(n)\}_{n=1}^\infty$ be defined as in Lemma \ref{paul}. Then, for all $n\ge n_0$, it holds that
				\begin{equation}\label{eq7}
				\|x^{\nu(n)}-z\|^2\le \|x^{\nu(n)+1}-z\|^2,
				\end{equation}
				and
				\begin{equation}\label{eq8}
				\|x^{n}-z\|^2\le \|x^{\nu(n)+1}-z\|^2.
				\end{equation}
		By putting $B_2:=\sup\limits_{n \ge 1}\left|\frac{f(z)}{g(z)} - \theta_{\nu(n)}\right|$, we obtain from Lemma \ref{lm-1-bdd} and the inequality (\ref{eq7}) that
\begin{eqnarray*}
    \rho\|Ty^{\nu(n)}-y^{\nu(n)}\|^2
    &\le& \|x^{\nu(n)}-z\|^2 - \|x^{\nu(n)+1}-z\|^2 \\
    &&+ 2g(z)\eta_{\nu(n)}\left(\frac{f(z)}{g(z)}-\theta_{\nu(n)}\right) + B\eta_{\nu(n)}^2\\
    &\le&  2g(z)B_2\eta_{\nu(n)}+ B\eta_{\nu(n)}^2.
\end{eqnarray*}
This together with the assumption $\sum\limits_{n=1}^{\infty}\eta_{\nu(n)}^2 < \infty$
implies that 
\begin{equation}\label{eq9}
\lim\limits_{n\rightarrow\infty}\|Ty^{\nu(n)}-y^{\nu(n)}\|=0.
\end{equation}
Additionally, by utilizing Lemma \ref{lm-1-bdd} and the inequality (\ref{eq7}), we have 
\begin{equation*}  2g(z)\eta_{\nu(n)}\left(\theta_{\nu(n)}-\frac{f(z)}{g(z)}\right) \le \|x^{\nu(n)}-z\|^2 - \|x^{\nu(n)+1}-z\|^2 + B\eta_{\nu(n)}^2 \le B\eta_{\nu(n)}^2.
\end{equation*}
Since $\{\eta_{\nu(n)}\}_{n=1}^\infty\subset(0,\infty)$ and $\eta_{\nu(n)}\to0$, we deduce that
\begin{equation}\label{eq10}
\limsup\limits_{n\rightarrow\infty}\frac{f(x^{\nu(n)})}{g(x^{\nu(n)})}\le\frac{f(z)}{g(z)}.
\end{equation}
Choose a subsequence $\{x^{\nu(n_k)}\}_{k=1}^\infty$ of $\{x^{\nu(n)}\}_{n=1}^\infty$ 
arbitrarily. From the above, we have 
\begin{equation}\label{eq11}
\limsup\limits_{k\rightarrow\infty}\frac{f(x^{\nu(n_k)})}{g(x^{\nu(n_k)})}\le \limsup\limits_{n\rightarrow\infty}\frac{f(x^{\nu(n)})}{g(x^{\nu(n)})}\le\frac{f(z)}{g(z)}.
\end{equation}
Furthermore, since the sequence $\{y^{\nu(n_k)}\}_{k=1}^\infty$ is bounded, there exists a subsequence $\{y^{\nu(n_{k_j})}\}_{j=1}^\infty$ of $\{y^{\nu(n_k)}\}_{k=1}^\infty$ such that $y^{\nu(n_{k_j})}\to x^*\in\R^k.$ By invoking this and the relation (\ref{eq9}), the demi-closedness principle of $T$ yields that $x^*\in\fix T.$ 
Due to the fact that 
$\eta_{\nu(n)}\to0$ and the boundedness of $\{f^\prime(x^{n_{k_j}})+ \theta_{n_{k_j}}h^\prime(x^{n_{k_j}})\}_{j=1}^\infty$, we derive that 
$\|x^{\nu(n_{k_j})}-y^{\nu(n_{k_j})}\|\to 0$.
As $y^{\nu(n_{k_j})}\to x^*$, it follows that $x^{\nu(n_{k_j})}\to x^*$. This, together with the lower semicontinuity of $\frac{f}{g}$, the boundedness of $\{\theta_{\nu(n_{k_j})}\}$ and the inequality (\ref{eq11}), implies that
\begin{equation*}
\frac{f(x^*)}{g(x^*)}\le\liminf\limits_{j\rightarrow\infty} \frac{f(x^{\nu(n_{k_j})})}{g(x^{\nu(n_{k_j})})} \le \limsup\limits_{j\rightarrow\infty} \frac{f(x^{\nu(n_{k_j})})}{g(x^{\nu(n_{k_j)}})}\le \frac{f(z)}{g(z)},
\end{equation*}
for all $z\in \fix T$. Thus, we have $x^*\in \mathcal{S}.$ Now, in view of (\ref{eq8}), since $x^{\nu(n_{k_j})}\to x^*$, we have
\begin{equation*}
0\le\limsup\limits_{j\rightarrow\infty}\|x^{n_{k_j}}-x^*\| \le\limsup\limits_{j\rightarrow\infty}\|x^{\nu(n_{k_j})}-x^*\|=0.
\end{equation*}
Hence, $ x^{n_{k_j}}\to x^*\in \mathcal{S}$. Therefore, in both cases, we obtain that there exists a subsequence of $\{x^n\}_{n=1}^\infty$ that converges to a point in $\mathcal{S}$. The proof is  complete.   
\end{proof}

   \begin{remark}\label{remark-algor1} 
According to the assumptions that $T$ is strongly quasi-nonexpansive and the sequence $\{x^n\}_{n=1}^\infty$ is bounded, one can view $T$ as a composition of finitely many strongly quasi-nonexpansive operators \cite[Theorem 2.1.48]{C12}. For instance, $T:=P_{C_m}T_{m-1}\cdots T_1$, where each $T_i$ (for  $i=1,2,\ldots,m-1$) is strongly quasi-nonexpansive,  and $P_{C_m}$ is the metric projection onto a bounded, closed and convex subset $C_m$. This ensures the boundedness of the sequence $\{x^n\}_{n=1}^\infty$. Additionally, in practical situations, one can construct a closed ball with a sufficiently large radius to control the boundedness of the generated sequences. For further discussion, see references \cite{I11,I15,IP16,PN21}.
   \end{remark}
			

\subsection{Convergence of the Strongly Convex Case}\label{sect-3.2}

As we have presented in the previous section, the subsequential convergence of the FSSM is obtained by assuming the boundedness of the sequence $\{x^n\}_{n=1}^\infty$. In this section, we will show that if $f$ is assumed to be $\sigma$-strongly convex for some $\sigma>0$, then the assumption that  $\{x^n\}_{n=1}^\infty$ is bounded can be completely removed.  To proceed with this, we propose the modified version of the FSSM for solving Problem (\ref{main-FP}) in the following: 	
		\vskip2mm
		\begin{algorithm}[H]
			\SetAlgoLined
			\vskip2mm
			\textbf{Initialization}:Choose a stepsize sequence $\{\eta_n\}_{n=1}^\infty\subset(0,\infty)$ and take an initial point $x^1\in \R^k$ with $g(x_1)>0$. Set $\theta_1=\frac{f(x^1)}{g(x^1)}$. 

\textbf{Iterative step}: For a current iterate $x^n\in \im T$ ($n\in\N$), calculate as follows:
\vspace{0.3cm}

\textbf{Step 1.} Compute $\theta_n=\frac{f(x^n)}{g(x^n)}$. 

\vspace{0.3cm}

\textbf{Step 2.} Compute subgradients $f^\prime (x^n)\in\partial f(x^n)$ and $h^\prime (x^n)\in\partial (-g)(x^n)$ and calculate the next iterate $x^{n+1}\in \im T$as 
			\begin{equation*}
     x^{n+1}=T\left(x^n-\eta_n \left(\frac{f^\prime (x^n) + \theta_nh^\prime(x^n)}{\max\{1,\|f^\prime (x^n) + \theta_nh^\prime(x^n)\|\}}\right)\right).
\end{equation*}
Update $n:=n+1$ and go to \textbf{Step 1.}
			\caption{Adaptive Fixed-Point Subgradient Splitting Method (AFSSM)}
			\label{algor2}
			\vskip2mm
			
		\end{algorithm}
		
		\vskip2mm
		
Before conducting the convergence analysis of the AFSSM, we make the following remark.

  	\begin{remark}  
  	 Notice that  the AFSSM  is a modified version of the FSSM in the case where the term $f^\prime (x^n) + \eta_n\theta_n h^\prime (x^n)$ is restricted in a unit ball centered at the origin in the attempt to remove the bounded assumption of the sequence $\{x^n\}_{n=1}^\infty$. 		  	
     \end{remark}

Now, let $\{x^n\} _{n = 1}^\infty$ be a sequence generated by the AFSSM. For the sake of simplicity, we denote 
\begin{equation*}
   u^n:=x^n-\eta_n \left(\frac{f^\prime (x^n) + \theta_nh^\prime(x^n)}{\max\{1,\|f^\prime (x^n) + \theta_nh^\prime(x^n)\|\}}\right), \quad \text{for all $n\in\N$.}
\end{equation*}

The following lemma demonstrates significant properties that will be needed in proving convergence analysis of the AFSSM in the setting of Problem (\ref{main-FP}). 

 \begin{lemma}\label{2-lm-1} 
Let $\{x^n\} _{n = 1}^\infty$ be a sequence generated by the AFSSM. Suppose that $\sum\limits_{n=1}^{\infty}\eta_n=\infty$ and $\sum\limits_{n=1}^{\infty}\eta_n^2 < \infty$. If the function $f$ is $\sigma$-strongly convex, then for any $z\in \fix T$, the following  hold:
\begin{itemize}
    \item[(i)]
    The sequence $\{x^n\}_{n=1}^\infty$ is bounded;
    \item[(ii)] 
     $\|x^{n+1}-z\|^2 \le \|x^n-z\|^2  + \frac{2g(z)\eta_n}{\max\{1,\|f^\prime(x^n) + \theta_nh^\prime(x^n)\|\}}\left(\frac{f(z)}{g(z)}-\theta_n\right) + \eta_n^2  - \rho\|Tu^n -u^n\|^2$;
     
   \item[(iii)] 
$\liminf\limits_{n\rightarrow\infty}\theta_n \le \frac{f(z)}{g(z)}.$
\end{itemize}

    \end{lemma}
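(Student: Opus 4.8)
The plan is to treat (ii) as the master per-iteration estimate, to obtain (i) from a strong-convexity strengthening of it, and then to deduce (iii) from (ii) together with the boundedness established in (i). To set up the common computation, write $v^n:=f^\prime(x^n)+\theta_n h^\prime(x^n)$ and $M_n:=\max\{1,\|v^n\|\}$, so that $u^n=x^n-\eta_n v^n/M_n$ and $\|v^n/M_n\|\le 1$. Applying the $\rho$-SQNE of $T$ to $x^{n+1}=Tu^n$ and expanding $\|u^n-z\|^2$ gives
\begin{equation*}
\|x^{n+1}-z\|^2 \le \|x^n-z\|^2 - \frac{2\eta_n}{M_n}\langle v^n, x^n-z\rangle + \eta_n^2 - \rho\|Tu^n-u^n\|^2 ,
\end{equation*}
where I used $\|v^n/M_n\|^2\le 1$. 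For (ii) I would bound $\langle v^n,x^n-z\rangle$ using only the convex subgradient inequalities for $f$ and $-g$; together with $\theta_n g(x^n)=f(x^n)$ this yields $\langle v^n,x^n-z\rangle\ge \theta_n g(z)-f(z)=-g(z)\bigl(\tfrac{f(z)}{g(z)}-\theta_n\bigr)$, and substituting gives exactly (ii).

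For (i) I would instead invoke the $\sigma$-strong convexity of $f$, which sharpens the $f$-inequality by $\tfrac{\sigma}{2}\|x^n-z\|^2$, giving
\begin{equation*}
\langle v^n, x^n-z\rangle \ge \tfrac{\sigma}{2}\|x^n-z\|^2 + \theta_n g(z) - f(z) \ge \tfrac{\sigma}{2}\|x^n-z\|^2 - f(z),
\end{equation*}
where the last step uses $\theta_n\ge 0$ (since $x^n\in\im T$ forces $f(x^n)\ge 0$, $g(x^n)>0$), $g(z)>0$, and $f(z)\ge 0$ because $z\in\fix T\subseteq\im T$. The key observation is then that once $\|x^n-z\|\ge R:=\sqrt{2f(z)/\sigma}$ the right-hand side is nonnegative, so $\langle v^n/M_n,x^n-z\rangle\ge 0$ and hence $\|x^{n+1}-z\|^2\le\|u^n-z\|^2\le\|x^n-z\|^2+\eta_n^2$; that is, whenever $x^n$ leaves the ball of radius $R$ about $z$ the distance can grow only by the summable amount $\eta_n^2$. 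When $\|x^n-z\|<R$, the crude bound $\|x^{n+1}-z\|\le\|u^n-z\|\le\|x^n-z\|+\eta_n<R+\bar\eta$ applies, where $\bar\eta:=\sup_n\eta_n<\infty$. A ``last index inside the ball of radius $R$'' argument then gives, for every $n$, $\|x^n-z\|^2\le \max\{\|x^1-z\|^2,(R+\bar\eta)^2\}+\sum_{j}\eta_j^2$, which is finite and uniform because $\sum_n\eta_n^2<\infty$; this proves (i).

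For (iii) I would use (i) to fix the remaining constants: boundedness of $\{x^n\}$ yields, via Proposition \ref{fact-BC17}, boundedness of $\{f^\prime(x^n)\}$ and $\{h^\prime(x^n)\}$ and, by continuity of $f,g$ and positivity of $g$, boundedness of $\{\theta_n\}$, so that $M^*:=\sup_n M_n<\infty$. Dropping the nonpositive term in (ii) and telescoping from $n=1$ to $K$ gives
\begin{equation*}
\sum_{n=1}^{K}\frac{2\eta_n g(z)}{M_n}\Bigl(\theta_n-\tfrac{f(z)}{g(z)}\Bigr) \le \|x^1-z\|^2 + \sum_{n=1}^{K}\eta_n^2 ,
\end{equation*}
whose right-hand side stays bounded as $K\to\infty$. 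Assuming for contradiction that $\liminf_n\theta_n>\tfrac{f(z)}{g(z)}$, there exist $\varepsilon>0$ and $N$ with $\theta_n-\tfrac{f(z)}{g(z)}\ge\varepsilon$ for $n\ge N$; then the tail contributes at least $\tfrac{2g(z)\varepsilon}{M^*}\sum_{n\ge N}\eta_n=\infty$, contradicting the bound. Hence (iii) follows.

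I expect the main obstacle to be part (i). Unlike the nonadaptive FSSM, there is no a priori bound on $\{x^n\}$, and the normalization makes the natural contraction coefficient $\sigma\eta_n/M_n$ depend on the unknown quantity $M_n$, so a direct Gronwall-type recursion is not available. The clean route is precisely the sign argument above: strong convexity forces the (normalized) search direction to have nonnegative inner product with $x^n-z$ whenever $x^n$ leaves the ball of radius $R$, after which only the summable terms $\eta_n^2$ can increase the distance. A secondary technical point, needed in (iii), is the boundedness of $\{\theta_n\}$ (equivalently $\{M_n\}$), which requires $g(x^n)$ to remain bounded away from $0$ along the bounded sequence and is where the continuity of $g$ and its positivity on $\im T$ enter.
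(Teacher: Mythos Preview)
Your argument is correct. Parts (ii) and (iii) match the paper's proof essentially line for line: the paper also substitutes the subgradient inequalities of $f$ and $-g$ into the basic $\rho$-SQNE estimate to get (ii), then uses boundedness from (i) to cap $M_n$ by some constant $D\ge 1$, telescopes, and argues by contradiction exactly as you do for (iii).

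For (i), however, you take a genuinely different route. The paper introduces the auxiliary sequence $\Phi_n:=\|x^n-z\|^2-\sum_{k<n}\eta_k^2$ and runs a Maing\'e-type two-case analysis (Lemma~\ref{paul}): either $\{\Phi_n\}$ is eventually nonincreasing, or along the Maing\'e subsequence $\{\nu(n)\}$ one has $\langle v^{\nu(n)},x^{\nu(n)}-z\rangle\le 0$, which via the subgradient inequalities forces $\theta_{\nu(n)}\le f(z)/g(z)$; strong convexity is then invoked in the monotonicity form of Lemma~\ref{lemma-strongconvex} to bound $\|x^{\nu(n)}-z\|$ by $\sigma^{-1}(\|f'(z)\|+\tfrac{f(z)}{g(z)}\|h'(z)\|)$, and the Maing\'e relations push this bound to the full sequence. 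Your approach instead uses strong convexity in the subgradient-inequality form to get the sign estimate $\langle v^n,x^n-z\rangle\ge\tfrac{\sigma}{2}\|x^n-z\|^2-f(z)$ directly, then runs a ``last time inside the ball of radius $R$'' telescoping. Your argument is more elementary---it avoids Maing\'e's lemma entirely---while the paper's version yields a slightly more explicit bound on the subsequence and stays aligned with the case-splitting machinery reused in the later convergence theorems. Both rely on $\theta_n\ge 0$ (via $x^n\in\im T$, $f\ge 0$, $g>0$) and on the same closing observation that boundedness of $\{x^n\}$ makes $\{M_n\}$ bounded for (iii).
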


\begin{proof}
    Let $z\in\fix T$. Assume that $f$ is $\sigma$-strongly convex for some $\sigma>0$. We start with proving (i). By the $\rho$-strong quasi-nonexpansivity of $T$, we obtain
{\small \begin{eqnarray}\label{2-eq1}
    \|x^{n+1}-z\|^2 
    &\le& \left\|x^n-\eta_n \left(\frac{f^\prime (x^n) + \theta_nh^\prime(x^n)}{\max\{1,\|f^\prime (x^n) + \theta_nh^\prime(x^n)\|\}}\right)-z\right\|^2 - \rho\|Tu^n -u^n\|^2\nonumber\\
&=&\|x^n-z\|^2 + \eta_n^2 - \frac{2\eta_n}{\max\{1,\|f^\prime (x^n) + \theta_nh^\prime(x^n)\|\}}\langle x^n-z, f^\prime(x^n) + \theta_n h^\prime(x^n)\rangle \nonumber\\
&&- \rho\|Tu^n -u^n\|^2.
     \end{eqnarray} }
Since $\rho>0$, we thus have
{\small \begin{equation}\label{2-eq2}
    \|x^{n+1}-z\|^2 \le \|x^n-z\|^2 + \eta_n^2 - \frac{2\eta_n}{\max\{1,\|f^\prime (x^n) + \theta_nh^\prime(x^n)\|\}}\langle x^n-z, f^\prime(x^n) + \theta_n h^\prime(x^n)\rangle. 
\end{equation} }
Now, we set 
\begin{equation*}
    \Phi_n:=\|x^n-z\|^2 - \sum\limits_{k = 1}^{n-1}\eta_k^{2}, \quad \text{for all $n\in\N.$}
\end{equation*}
Then, we have 
\begin{equation}\label{2-eq3}
    \Phi_{n+1}-\Phi_{n}+\frac{2\eta_n}{\max\{1,\|f^\prime (x^n) + \theta_nh^\prime(x^n)\|\}}\langle x^n-z, f^\prime(x^n) + \theta_n h^\prime(x^n)\rangle\le0.
\end{equation}
To obtain the boundedness of the sequence $\{x^n\}_{n=1}^\infty$, we divide the proof into 2 cases:

\indent\textbf{Case 1.} Assume that there is an ${n_0}\in\mathbb{N}$ such that $\{\Phi_n\}_{n\geq n_0}$ is nonincreasing. Then, for all $n\ge n_0$, we have
$ \|x^n-z\|^2 -  \sum\limits_{k = 1}^{n-1}\eta_k^{2} \le \Phi_{n_0}.$ Due to the boundedness of the sequence $\{\eta_n\}_{n=1}^\infty$, we obtain that the sequence $\{\|x^n-z\|^2\}_{n=1}^\infty$ is bounded. This immediately implies that the sequence $\{x^n\}_{n=1}^\infty$ is bounded.

\textbf{Case 2.}
		Assume that there is a subsequence $\{\Phi_{n_k}\}_{k=1}^\infty$ of $\{\Phi_{n}\}_{n=1}^\infty$ such that $\Phi_{n_k}\le\Phi_{n_k+1}$ for all $k\in\N$.
    Let $\{\nu(n)\}_{n=1}^\infty$ be defined as in Lemma \ref{paul}. Then, for all $n\ge n_0$, it holds that
				\begin{equation}\label{2-eq4}
				\Phi_{\nu(n)}\le \Phi_{\nu(n)+1} ,
				\end{equation}
				and
			\begin{equation}\label{2-eq5}
				\Phi_{n} \le \Phi_{\nu(n)+1}.
				\end{equation}
In view of (\ref{2-eq3}), we obtain from (\ref{2-eq4}) that 
\begin{eqnarray*}
    &&\frac{2\eta_{\nu(n)}}{\max\{1,\|f^\prime (x^{\nu(n)}) + \theta_{\nu(n)}h^\prime(x^{\nu(n)})\|\}}\langle x^{\nu(n)}-z, f^\prime(x^{\nu(n)}) + \theta_{\nu(n)}h^\prime(x^{\nu(n)})\rangle \\
    &\le& \Phi_{\nu(n)} - \Phi_{\nu(n)+1} \le 0.
\end{eqnarray*}
As $\{\eta_{\nu(n)}\}_{n=1}^\infty\subset(0,\infty)$, we deduce that
\begin{equation}\label{2-eq6}
   \langle x^{\nu(n)}-z, f^\prime(x^{\nu(n)})+ \theta_{\nu(n)}h^\prime(x^{\nu(n)})\rangle \le 0.
\end{equation}
Note that, by combining the subgradient inequalities of $f$ and $-g$, we obtain
\begin{align}
    \langle f^\prime(x^{\nu(n)}) + \theta_{\nu(n)}h^\prime(x^{\nu(n)}), z-x^{\nu(n)}\rangle 
     &\le f(z) - \theta_{\nu(n)}g(z).\nonumber
     \end{align}
This and the inequality (\ref{2-eq6}) yield that 
\begin{equation*}
    \theta_{\nu(n)}g(z) - f(z) \le \langle x^{\nu(n)} -z, f^\prime(x^{\nu(n)}) + \theta_{\nu(n)}h^\prime(x^{\nu(n)})\rangle \le 0,
\end{equation*}
which immediately implies that 
\begin{equation}\label{2-eq7}
\theta_{\nu(n)}\le\frac{f(z)}{g(z)}.
\end{equation}
Furthermore, since $f$ is $\sigma$-strongly convex and $-g$ is convex, we obtain that $f - \theta_{\nu(n)}g$ is also $\sigma$-strongly convex \cite[Lemma 5.20]{B17}. By invoking Lemma \ref{lemma-strongconvex},
the inequality (\ref{2-eq6}), the triangle inequality and the relation (\ref{2-eq7}), we obtain
{\small \begin{align}
   \sigma\|x^{\nu(n)}-z\|^2 &\le \langle (f^\prime(x^{\nu(n)}) + \theta_{\nu(n)}h^\prime(x^{\nu(n)})) - (f^\prime(z) + \theta_{\nu(n)}h^\prime(z)),  x^{\nu(n)}-z\rangle\nonumber\\
&\le \|f^\prime(z) + \theta_{\nu(n)}h^\prime(z)\|\|x^{\nu(n)}-z\|\nonumber\\
&\le\left(\|f^\prime(z)\|+\frac{f(z)}{g(z)}\|h^\prime(z)\|\right)\|x^{\nu(n)}-z\|,\nonumber
     \end{align}  }
     which leads to
    \begin{equation*}
        \|x^{\nu(n)}-z\|\le\frac{1}{\sigma}\left(\|f^\prime(z)\|+\frac{f(z)}{g(z)}\|h^\prime(z)\|\right).
    \end{equation*}
Thus, the sequence $\{\|x^{\nu(n)}-z\|\}_{n=1}^\infty$ is bounded. In addition, we note that
\begin{equation*}
\Phi_{\nu(n)+1}:=\|x^{\nu(n)+1}-z\|^2 - \sum\limits_{k = 1}^{{\nu(n)}}\eta_k^{2}\le \|x^{\nu(n)+1}-z\|^2,
\end{equation*}
which implies that the sequence $\{\Phi_{\nu(n)+1}\}_{n=1}^\infty$ is bounded. Finally, according to the inequality (\ref{2-eq5}), we deduce that the sequence $\{\Phi_{n}\}_{n=1}^\infty$  is bounded and hence the sequence $\{x^n\}_{n=1}^\infty$ is also bounded.

Next, by invoking the subgradient inequalities of $f$ and $-g$,  the inequality (\ref{2-eq1}) becomes  
{\small \begin{eqnarray}
    \|x^{n+1}-z\|^2
    &\le& \|x^n-z\|^2 + \eta_n^2   -  \rho\|Tu^n -u^n\|^2 \nonumber\\
    &&+ \frac{2\eta_n}{\max\{1,\|f^\prime (x^n) + \theta_nh^\prime(x^n)\|\}}(f(z)-\theta_ng(z) + \theta_ng(x^n)-f(x^n)),\nonumber
\end{eqnarray} }
from which we deduces (ii).

 Finally, we shall prove (iii). 
Due to the boundedness of $\{\max\{1,\|f^\prime (x^n) + \theta_nh^\prime(x^n)\|\}\}_{n=1}^\infty$, there exists a real number $D\ge1$ such that $\max\{1,\|f^\prime (x^n) + \theta_nh^\prime(x^n)\|\}\le D$ for all $n\ge1$. Thus, (ii) becomes 
\begin{eqnarray*}
    2g(z)\eta_n\left(\theta_n - \frac{f(z)}{g(z)}\right)      &\le& D(\|x^n-z\|^2 - \|x^{n+1}-z\|^2 + \eta_n^2).
\end{eqnarray*}
   By summing the above inequality from $n=n_0$ to infinity, we obtain that
    \begin{eqnarray*}
  	\sum\limits_{n = n_0}^\infty\eta_n\left(\theta_n - \frac{f(z)}{g(z)}\right)
&\le& \frac{D}{2g(z)}\|x^{n_0}-z\|^2 +\frac{D}{2g(z)}\sum\limits_{n = n_0}^\infty \eta_n^2, 
   \end{eqnarray*}
   which implies that 
  	$\sum\limits_{n =1}^\infty\eta_n\left(\theta_n - \frac{f(z)}{g(z)}\right)< \infty.$
Then, by following the same arguments as in Theorem \ref{thm-1-obj}(ii), we obtain (iii) immediately. The proof is complete. 
\end{proof}


The following is the convergence theorem of the AFSSM in the setting of Problem (\ref{main-FP}).
		
	\begin{theorem}\label{2-main-thm}
			Let $\{x^n\} _{n = 1}^\infty$ be a sequence generated by the AFSSM. Suppose that $\sum\limits_{n=1}^{\infty}\eta_n=\infty$ and $\sum\limits_{n=1}^{\infty}\eta_n^2 < \infty$.
 			 If the operator $T$ satisfies the demi-closedness principle and the function $f$ is $\sigma$-strongly convex, then the sequence $\{x^n\}_{n=1}^\infty$ converges to the unique solution to Problem (\ref{main-FP}).
		\end{theorem}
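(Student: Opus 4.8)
The plan is to prove the statement in three stages: first show that Problem~(\ref{main-FP}) has a \emph{unique} solution $x^*$, then use the boundedness together with the estimates already collected in Lemma~\ref{2-lm-1}, and finally upgrade these to full strong convergence via a Maingé-type dichotomy, in the spirit of the proof of Theorem~\ref{main-thm-bdd}. For uniqueness, I would fix the optimal value $\theta^*=\min_{x\in\fix T}\frac{f(x)}{g(x)}$ and introduce the auxiliary function $\phi:=f-\theta^*g$. Since every $\rho$-SQNE operator is QNE, the set $\fix T$ is closed and convex, and because $\fix T\subseteq\im T$ we have $f\geq0$ and $g>0$ on $\fix T$, so $\theta^*\geq0$. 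Hence $\phi=f+\theta^*(-g)$ is the sum of the $\sigma$-strongly convex function $f$ and the convex function $\theta^*(-g)$, and is therefore $\sigma$-strongly convex. For every $z\in\fix T$ one has $\phi(z)=g(z)\big(\tfrac{f(z)}{g(z)}-\theta^*\big)\geq0$, whereas $\phi(x^*)=0$ for any solution $x^*\in\mathcal{S}$; thus each solution minimizes the strongly convex $\phi$ over the convex set $\fix T$. Since a strongly convex function has at most one minimizer over a convex set, $\mathcal{S}=\{x^*\}$ is a singleton.

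Next I would denote by $x^*$ this unique solution and set $a_n:=\|x^n-x^*\|^2$. By Lemma~\ref{2-lm-1}(i) the sequence $\{x^n\}_{n=1}^\infty$ is bounded, so $\{\theta_n\}$ is bounded and $B_1:=\sup_n|\theta^*-\theta_n|<\infty$; moreover $\eta_n\to0$ because $\sum\eta_n^2<\infty$, and Lemma~\ref{2-lm-1}(iii) gives $\liminf_n\theta_n\leq\theta^*$. In the first case, where $\{a_n\}$ is eventually nonincreasing, $\lim_n a_n$ exists. Applying Lemma~\ref{2-lm-1}(ii) with $z=x^*$, the telescoping difference $a_n-a_{n+1}\to0$, the cross term is dominated by $2g(x^*)B_1\eta_n\to0$, and $\eta_n^2\to0$, so $\rho\|Tu^n-u^n\|^2\to0$ and hence $\|Tu^n-u^n\|\to0$. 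Along a subsequence realizing $\liminf_n\theta_n$, boundedness lets me extract $u^{n_{k_j}}\to x^\dagger$; the demi-closedness principle yields $x^\dagger\in\fix T$, and since $\|x^n-u^n\|\leq\eta_n\to0$ also $x^{n_{k_j}}\to x^\dagger$. Continuity of $f/g$ gives $\tfrac{f(x^\dagger)}{g(x^\dagger)}\leq\theta^*$, while $x^\dagger\in\fix T$ forces the reverse, so $x^\dagger\in\mathcal{S}=\{x^*\}$. Thus $a_{n_{k_j}}\to0$, and since $\lim_n a_n$ exists, $a_n\to0$, i.e. $x^n\to x^*$.

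In the second case, $\{a_n\}$ is not eventually monotone, so Lemma~\ref{paul} provides $\{\nu(n)\}$ with $a_{\nu(n)}\leq a_{\nu(n)+1}$ and $a_n\leq a_{\nu(n)+1}$. Using Lemma~\ref{2-lm-1}(ii) at $z=x^*$ together with $a_{\nu(n)}\leq a_{\nu(n)+1}$, I obtain $\|Tu^{\nu(n)}-u^{\nu(n)}\|\to0$ and, after dividing by $\eta_{\nu(n)}$ and invoking the uniform bound $\max\{1,\|\cdot\|\}\leq D$ from the proof of Lemma~\ref{2-lm-1}(iii), the estimate $\limsup_n\theta_{\nu(n)}\leq\theta^*$. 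Repeating the demi-closedness/extraction argument then shows that every subsequence of $\{x^{\nu(n)}\}$ has a further subsequence converging to $x^*$, whence $x^{\nu(n)}\to x^*$, i.e. $a_{\nu(n)}\to0$. Feeding this back into Lemma~\ref{2-lm-1}(ii) (where the cross term and $\eta_{\nu(n)}^2$ both vanish) gives $a_{\nu(n)+1}\to0$, and finally $0\leq a_n\leq a_{\nu(n)+1}\to0$, so again $x^n\to x^*$.

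The principal difficulty, already flagged in the remark following the FSSM, is that $\theta^*$ and $\theta_n$ are \emph{incomparable}: one cannot assume $\theta^*\leq\theta_n$, so the cross term $\tfrac{f(z)}{g(z)}-\theta_n$ in Lemma~\ref{2-lm-1}(ii) has no definite sign, which is exactly why a plain Fejér-monotonicity argument is unavailable and the Maingé dichotomy must be used; the term is instead tamed by the uniform bound $B_1$ on $|\theta^*-\theta_n|$ together with $\eta_n\to0$. A secondary delicate point is the uniqueness step, which relies on the convexity of $\fix T$ and on $\theta^*\geq0$ so that $\phi=f+\theta^*(-g)$ genuinely inherits the $\sigma$-strong convexity of $f$.
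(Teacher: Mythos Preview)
Your proposal is correct and follows the same global architecture as the paper (boundedness from Lemma~\ref{2-lm-1}, then a two-case Maing\'e dichotomy mirroring Theorem~\ref{main-thm-bdd}), but two points deserve comment.

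First, your uniqueness argument is genuinely different and more elementary than the paper's. The paper appeals to an external result stating that the ratio of a nonnegative strongly convex function to a bounded positive concave function is strongly quasi-convex, and then to another external result on minimizers of strongly quasi-convex functions over convex sets. Your route via the parametric function $\phi=f-\theta^* g$ avoids both citations: you only need $\theta^*\geq0$ (so that $\phi$ inherits the $\sigma$-strong convexity of $f$), the convexity of $\fix T$, and the trivial observation that every solution minimizes $\phi$ over $\fix T$. This is cleaner and entirely self-contained.

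Second, your dichotomy is set up with $z=x^*$ from the outset, whereas the paper runs Case~1 ``for all $z\in\fix T$'' and Case~2 ``for some $z\in\fix T$'', in order to reuse the Opial-type argument from Theorem~\ref{main-thm-bdd}. Since you have already secured uniqueness, fixing $z=x^*$ throughout is legitimate and makes the final step in Case~2 (passing from $a_{\nu(n)}\to0$ to $a_{\nu(n)+1}\to0$ via Lemma~\ref{2-lm-1}(ii), and then $a_n\leq a_{\nu(n)+1}\to0$) completely transparent. This is a modest but genuine simplification.
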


\begin{proof}
Suppose that $T$ satisfies the demi-closedness principle and $f$ is $\sigma$-strongly convex for some $\sigma>0$. 
Since $f$ is $\sigma$-strongly convex and nonnegative, and $g$ is concave, positive and bounded above by $M$, it follows from \cite[Proposition 4.1]{ILMY24} that the function $\frac{f}{g}$ is $\frac{\sigma}{M}$-strongly quasi-convex\footnote{A function $f:\R^k\to\R$ is said to be {\it $\rho$-strongly quasi-convex} \cite{L22}, where $\rho>0$, if $f(\alpha x + (1-\alpha)y)\le\max\{f(x),f(y)\} - \alpha(1-\alpha)\frac{\rho}{2}\|x-y\|^2$ for all $x,y\in\R^k$ and $\alpha\in[0,1]$.}. Furthermore, since $\fix T$ is closed and convex \cite[Proposition 2.6(ii)]{BC01-P}, it follows from \cite[Corollary 3]{L22} that Problem (\ref{main-FP}) has a unique minimizer. Therefore, the solution set $\mathcal{S}$ contains exactly one point, which we denote by $x^*$. 
The proof follows the same structure as in Theorem \ref{main-thm-bdd}, except that we use Lemma \ref{2-lm-1} instead of Lemma \ref{lm-1-bdd} to proceed. To be precise, we divide the proof into two cases, as in Theorem \ref{main-thm-bdd}, with some additional details as follows:

\indent\textbf{Case 1.} Suppose that there is an ${n_0}\in\mathbb{N}$ such that $\{\|x^n-z\|^2\}_{n\geq n_0}$ is nonincreasing for all $z\in \fix T$. Then, $\mathop {\lim }\limits_{n \to \infty }{\|x^n-z\|^2}$ exists. 
Set $P_1:=\sup\limits_{n\ge 1}\left|\frac{f(z)}{g(z)} - \theta_n\right|$. Lemma \ref{2-lm-1}(ii) implies that 
\begin{equation*}
\rho\limsup\limits_{n\rightarrow\infty}\|Tu^n - u^n\|
  \le\lim\limits_{n\rightarrow\infty}\|x^{n}-z\|^2 - \lim\limits_{n\rightarrow\infty}\|x^{n+1}-z\|^2 + 2g(z)P_1\lim\limits_{n\rightarrow\infty}\eta_n + \lim\limits_{n\rightarrow\infty}\eta_n^2.
\end{equation*}
The assumption $\sum\limits_{n=1}^{\infty}\eta_n^2 < \infty$ and the fact that $\rho>0$ lead to 
$\|Tu^n-u^n\|\to0.$
Then, by following the same arguments as in {\bf Case 1} of Theorem \ref{main-thm-bdd}, we obtain that there exists a subsequence $\{x^{n_k}\}_{k=1}^\infty$ of $\{x^{n}\}_{n=1}^\infty$ such that $x^{n_k}\to x^*\in \mathcal{S}$.
    Furthermore, since the solution set $\mathcal{S}$ contains exactly one point, we conclude that the sequence $\{x^n\}_{n=1}^\infty$ converges to the unique solution $x^*$.

 \textbf{Case 2.}
		Suppose that there is a point $z\in \fix T$ and a subsequence $\{x^{n_k}\}_{k=1}^\infty$ of $\{x^n\}_{n=1}^\infty$ such that $\|x^{n_k}-z\|^2<\|x^{n_k+1}-z\|^2$ for all $k\in\N$.
    Let $\{\nu(n)\}_{n=1}^\infty$ be defined as in Lemma \ref{paul}. Then, for all $n\ge n_0$, it holds that
				\begin{equation}\label{eq7-sc}
				\|x^{\nu(n)}-z\|^2\le \|x^{\nu(n)+1}-z\|^2,
				\end{equation}
				and
				\begin{equation}\label{eq8-sc}
				\|x^{n}-z\|^2\le \|x^{\nu(n)+1}-z\|^2.
				\end{equation}
		By setting $P_2:=\sup\limits_{n \ge 1}\left|\frac{f(z)}{g(z)} - \theta_{\nu(n)}\right|$, we obtain from Lemma \ref{2-lm-1}(ii) and the inequality (\ref{eq7-sc}) that
\begin{eqnarray*}
    \rho\|Tu^{\nu(n)}-u^{\nu(n)}\|^2
    &\le& \|x^{\nu(n)}-z\|^2 - \|x^{\nu(n)+1}-z\|^2 \\
    &&
    + \frac{2g(z)P_2\eta_{\nu(n)}}{\max\{1,\|f^\prime (x^{\nu(n)}) + \theta_{\nu(n)}h^\prime(x^{\nu(n)})\|\}} + \eta_{\nu(n)}^2\\
    &\le&  2g(z)P_2\eta_{\nu(n)}+ \eta_{\nu(n)}^2.
\end{eqnarray*}
This together with $\sum\limits_{n=1}^{\infty}\eta_{\nu(n)}^2 < \infty$
implies that 
$\|Tu^{\nu(n)}-u^{\nu(n)}\|\to0.$
Since $\{\max\{1,\|f^\prime (x^{\nu(n)}) + \theta_{\nu(n)}h^\prime(x^{\nu(n)})\|\}\}_{n=1}^\infty$ is bounded, there exists a real number $L\ge1$ such that $\max\{1,\|f^\prime (x^{\nu(n)}) + \theta_{\nu(n)}h^\prime(x^{\nu(n)})\|\}\le L$ for all $n\ge1$. From this and the inequality (\ref{eq7-sc}), we deduce from Lemma \ref{2-lm-1}(ii) that 
\begin{eqnarray*}
    \eta_{\nu(n)}\left(\theta_{\nu(n)} - \frac{f(z)}{g(z)}\right) 
     \le \frac{L}{2g(z)}(\|x^{\nu(n)}-z\|^2 - \|x^{\nu(n)+1}-z\|^2 + \eta_{\nu(n)}^2) \le \frac{L}{2g(z)}\eta_{\nu(n)}^2.
\end{eqnarray*}
The facts that $\{\eta_{\nu(n)}\}_{n=1}^\infty\subset(0,\infty)$, $g(z)>0$, and 
$\eta_{\nu(n)}\to0$ imply that 
\begin{equation*}
\limsup\limits_{n\rightarrow\infty}\frac{f(x^{\nu(n)})}{g(x^{\nu(n)})}\le\frac{f(z)}{g(z)}.
\end{equation*}
Then, by following the same arguments as in {\bf Case 2} of Theorem \ref{main-thm-bdd}, we obtain that there exists a subsequence $\{x^{\nu(n_{k})}\}_{n=1}^\infty$ of $\{x^{\nu(n)}\}_{n=1}^\infty$ such that $x^{\nu(n_k)}\to x^*$.
    Since the solution set $\mathcal{S}$ consists of one point, we have that $x^{\nu(n)}\to x^*$. In view of the relation (\ref{eq8-sc}), we get
    \begin{equation*} 0\le\limsup\limits_{n\rightarrow\infty}\|x^n - x^*\|\le \limsup\limits_{n\rightarrow\infty}\|x^{\nu(n)} - x^*\|=0, \end{equation*}
    which implies that the sequence $\{x^n\}_{n=1}^\infty$ converges to the unique solution $x^*$. The proof is complete.  
\end{proof}

\begin{remark} Some essential remarks are as follows:
\begin{itemize} 

\item[(i)] Since the AFSSM does not require the boundedness of the sequence $\{x^n\}_{n=1}^\infty$, as in the FSSM, one can view the operator $T$ as a composition or simultaneous application of strongly quasi-nonexpansive operators. That is, $T$ can either be the sequential composition  $T:=T_{m}T_{m-1}\cdots T_1$, or the weighted sum  $T:=\sum_{i=1}^m\omega_i T_i$, where $T_i$ is a strongly quasi-nonexpansive operator for all $i=1,2,\ldots,m$.

 \item[(ii)] It is important to note that the FSSM and the AFSSM are based on different assumptions regarding the boundedness of the sequence \( \{x^n\}_{n=1}^\infty \) and the strong convexity of the function \( f \), which have significant implications for their convergence behavior. The FSSM assumes that the sequence \( \{x^n\}_{n=1}^\infty \) is bounded, a condition that is crucial for ensuring subsequential convergence. In contrast, the AFSSM removes the boundedness assumption on \( \{x^n\}_{n=1}^\infty \), but requires \( f \) to be strongly convex, which guarantees a unique solution and faster convergence. However, this assumption limits the algorithm's applicability to problems where strong convexity holds. Additionally, the step size in the AFSSM must be restricted to \( \frac{\eta_n}{\max\{1,\|f^\prime (x^n) + \theta_n h^\prime(x^n)\|\}} \), in contrast to the simpler step size \( \eta_n \) in the FSSM. This restriction on the step size may affect the speed of convergence of the AFSSM. These differences highlight a trade-off between the two algorithms: the FSSM is more broadly applicable due to its convexity assumption, but the boundedness requirement can sometimes be restrictive. On the other hand, the AFSSM does not require the boundedness assumption but instead relies on strong convexity and a restriction on the step size \( \eta_n \), which can impact its convergence rate.

\end{itemize}
   \end{remark}



\section{Incremental Fixed-Point Subgradient Splitting Method}\label{sect-incremental}

Within the context of fractional programming, there is a natural connection to the problem that considers the minimization of a sum of finitely many ratios. In this section, we consider the following problem:

 \begin{equation} \label{P-Incremental}\displaystyle
 \tag{SP}
	\begin{array}{ll}
\mathrm{minimize}\hskip0.2cm\indent  \displaystyle \sum\limits_{i=1}^{m}\frac{f_i(x)}{g_i(x)}\\
	\mathrm{subject \hskip0.2cm to}\indent \displaystyle  x\in\bigcap\limits_{i = 1}^m \fix{{T_i}},
	\end{array}  
 \end{equation}
where the following assumptions hold:
\begin{itemize}
\item[(B1)] $T_i:\R^k\to\R^k$, $i=1,2,\ldots,m$, are firmly nonexpansive operators.
\item [(B2)] $f_i:\R^k\to\R$, $i=1,2,\ldots,m$,  are convex, and each $f_i(x)\geq0$ for all   $x\in\bigcap_{i=1}^m \im T_i$;
\item[(B3)] $g_i:\R^k\to\R$, $i=1,2,\ldots,m$, are concave, and there exist constants $M,N>0$ such that $m+N\le M$ and $0<N\le g_i(x)\leq M$ for all $i=1,2,\ldots,m$ and for all $x\in\bigcap_{i=1}^m \im T_i$.
\end{itemize}
We define $\theta^*:=\min\limits_{x\in \bigcap_{i=1}^m\fix T_i}\sum\limits_{i=1}^{m}\frac{f_i(x)}{g_i(x)}$,
and denote the solution set of Problem (\ref{P-Incremental}) as 
$\mathcal{Z}:=\left\{x^*\in\bigcap_{i=1}^m\fix T_i: \sum\limits_{i=1}^{m}\frac{f_i(x^*)}{g_i(x^*)} = \theta^* \right\},$
providing that the solution set $\mathcal{Z}$ is nonempty.

Motivated by the conventional ISM and Iiduka's I-ISM \cite{I16}, we present an iterative algorithm for solving the Problem (\ref{P-Incremental}). As in the previous section, we first introduce the necessary notations. Let $f_i^\prime(x)$ and $g_i^\prime(x)$, for all $i=1,2,\ldots,m$, denote the subgradients of $f_i$ and $g_i$ at $x\in\R^k$, respectively. For convenience, we define $h_i^\prime(x):= (-g_i)^\prime(x)$ for all $i=1,2,\ldots,m$, which represent the subgradient of $-g_i$ at $x$.
We now present the following method to solve Problem  (\ref{P-Incremental}):	
		
		\vskip2mm
        \begin{algorithm}[H]
			\SetAlgoLined
			\vskip2mm
			\textbf{Initialization}: Choose a stepsize sequence $\{\eta_n\}_{n=1}^\infty\subset(0,\infty)$ and take an initial point $x^1\in\R^k$ with $g_i(x_1)>0$ $i=1,2,\ldots,m$. 
			
			\textbf{Iterative step}: For a current iterate $x^n \in\R^k$ ($n\in\N$), calculate as follows:
			
    \vspace{0.3cm}
			\textbf{Step 1}. Set $x^{0,n}=x^n$.
            
             \vspace{0.3cm}
			\textbf{Step 2}. For any $i=1,2,\ldots,m$, compute 
 			\begin{equation*}
x^{i,n}=T_i\left(x^{i-1,n}-\eta_n f^\prime_i (x^{i-1,n}) - \eta_n\theta_{i,n} h^\prime_i (x^{i-1,n})\right),
\end{equation*}
where $\theta_{i,n}=\frac{f_i(x^n)}{g_i(x^n)}$, $f^\prime_i (x^{i-1,n})\in\partial f_i(x^{i-1,n})$ and $h_i^\prime (x^{i-1,n})\in\partial (-g_i)(x^{i-1,n})$.
\vspace{0.3cm}

			\textbf{Step 3}. Set $x^{n+1}=x^{m,n}$.
\vspace{0.3cm}
   
			Update $n:=n+1$ and go to \textbf{Step 1}.
			\caption{ Incremental Fixed-Point Subgradient Splitting Method (IFSSM) }
			\label{incremental-algor}
			\vskip2mm
			
		\end{algorithm}
		
		\vskip2mm

  Now, let $\{x^{i,n}\} _{n = 1}^\infty$ be a sequence generated by the IFSSM. For the sake of simplicity, we denote 
\begin{equation*}
     v^{i-1,n}:=x^{i-1,n}-\eta_n f^\prime_i (x^{i-1,n}) - \eta_n\theta_{i,n} h^\prime_i (x^{i-1,n}), \quad \text{for all $n\in\N$.}
\end{equation*}
Moreover, we define \( F: \mathbb{R}^k \to \mathbb{R} \) and \( g: \mathbb{R}^k \to \mathbb{R} \) at \( x \in \mathbb{R}^k \) as
\[
F(x) := \sum_{i=1}^{m} \frac{f_i(x)}{g_i(x)} \quad\text{and}\quad
g(x) := \sum_{i=1}^{m} g_i(x).
\]

\begin{assumption}\label{inc-assumption}
We assume the following conditions:
\begin{itemize}
    \item[(i)] The sequence $\{\eta_n\}_{n=1}^\infty\subset(0,\infty)$  satisfies $\sum\limits_{n=1}^{\infty}\eta_n=\infty$ and $\sum\limits_{n=1}^{\infty}\eta_n^2 < \infty$;
    \item[(ii)] The sequence $\{x^{n}\}_{n=1}^\infty$ is bounded.
\end{itemize}
    
\end{assumption}

Due to the boundedness of the sequence  $\{x^{n}\} _{n = 1}^\infty$, Proposition \ref{fact-BC17} implies the boundedness of the subgradients of $f_i$ and $-g_i$. Specifically, for each $i=1,2,\ldots,m$ and for all $n\in\N$, there exist scalars $L_i>0$ such that 
$$ \|u\|\le L_i \quad \text{for all} \quad u\in\partial f_i(x^n)\cup\partial f_i(x^{i-1,n}),$$ 
and there exist scalars $E_i>0$ such that $$\|w\|\le E_i \quad \text{for all} \quad w\in\partial (-g_i)(x^n)\cup\partial (-g_i)(x^{i-1,n}).$$


\begin{remark}
    According to the assumptions that each $T_i$ (for $i=1,2,\ldots,m$) is firmly nonexpansive and that the sequence $\{x^n\}_{n=1}^\infty$ is bounded, one can view each $T_i$ as the metric projection onto a bounded, closed and convex subset $P_{C_i}$. This ensures the boundedness of the sequence $\{x^n\}_{n=1}^\infty$.
    Furthermore, as discussed in Remark \ref{remark-algor1}(ii), in practical situations, one can also construct a closed ball with a sufficiently large radius to control the boundedness of the generated sequences. For further details, see references \cite{I11,I15,IP16,PN21}, particularly \cite{IP16}, where the author considers the incremental scheme.
\end{remark}


The following lemma indicates some essential properties in establishing the convergence analysis of the IFSSM in the setting of Problem (\ref{P-Incremental}).

   \begin{lemma}\label{incre-lm-1} 
    Let $\{x^n\} _{n = 1}^\infty$ be a sequence generated by the IFSSM. Suppose that Assumption \ref{inc-assumption} holds. Then, for any $z\in\bigcap_{i=1}^m\fix T_i$, 
    the following statements hold:
 \begin{itemize}
   \item[(i)] 
 $\|x^{n+1}-z\|^2 \le \|x^n-z\|^2 + 2g(z)\eta_n\left(F(z)-F(x^n)\right) +  4M\left(N+\frac{1}{N}\right)(L^2+E^2H^2)\eta_n^2 \\
       - \frac{1}{2}\sum\limits_{i=1}^{m}\|x^{i,n}-x^{i-1,n}\|^2,$
 where $H= \sup\limits_{n\ge 1}|\theta_{i,n}|$, $E=\sum\limits_{i=1}^{m}E_i$ and $L=\sum\limits_{i=1}^{m}L_i$;

   \item[(ii)] 
   $\liminf\limits_{n\rightarrow\infty}F(x^n) \le F(z).$
\end{itemize}

    \end{lemma}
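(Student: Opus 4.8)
The plan is to reproduce, one ratio at a time along the inner cycle $x^n=x^{0,n}\to x^{1,n}\to\cdots\to x^{m,n}=x^{n+1}$, the single-ratio estimate of Lemma~\ref{lm-1-bdd}, and then to pay for the discrepancy between the intermediate iterates $x^{i-1,n}$ and the base point $x^n$. First I would use that each firmly nonexpansive $T_i$ is $1$-SQNE (a cutter), so that for every $z\in\bigcap_{i=1}^m\fix T_i$,
\begin{equation*}
\|x^{i,n}-z\|^2\le\|v^{i-1,n}-z\|^2-\|x^{i,n}-v^{i-1,n}\|^2 .
\end{equation*}
Expanding $\|v^{i-1,n}-z\|^2$ and noting that $d_i^n:=f_i'(x^{i-1,n})+\theta_{i,n}h_i'(x^{i-1,n})\in\partial(f_i-\theta_{i,n}g_i)(x^{i-1,n})$ — a convex function because $\theta_{i,n}\ge0$ and $-g_i$ is convex — the subgradient inequality at $z$ yields the per-step estimate
\begin{equation*}
\|x^{i,n}-z\|^2\le\|x^{i-1,n}-z\|^2+2\eta_n\big[(f_i(z)-\theta_{i,n}g_i(z))-(f_i(x^{i-1,n})-\theta_{i,n}g_i(x^{i-1,n}))\big]+\eta_n^2\|d_i^n\|^2-\|x^{i,n}-v^{i-1,n}\|^2 .
\end{equation*}
Telescoping over $i=1,\dots,m$ collapses the left side to $\|x^{n+1}-z\|^2$ and the leading terms to $\|x^n-z\|^2$.

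I would then split the accumulated quantity into three groups. The \emph{main term} is $2\eta_n\sum_{i=1}^m(f_i(z)-\theta_{i,n}g_i(z))$; since $\theta_{i,n}=f_i(x^n)/g_i(x^n)$, one has $f_i(z)-\theta_{i,n}g_i(z)=g_i(z)\big(\tfrac{f_i(z)}{g_i(z)}-\theta_{i,n}\big)$, and the task is to recast $\sum_i g_i(z)\big(\tfrac{f_i(z)}{g_i(z)}-\theta_{i,n}\big)$ as $g(z)\big(F(z)-F(x^n)\big)$. The \emph{displacement error} is $-2\eta_n\sum_i(f_i(x^{i-1,n})-\theta_{i,n}g_i(x^{i-1,n}))$; because $f_i(x^n)-\theta_{i,n}g_i(x^n)=0$ each summand equals $(f_i(x^{i-1,n})-f_i(x^n))-\theta_{i,n}(g_i(x^{i-1,n})-g_i(x^n))$, which the subgradient bounds $L_i,E_i$ and $|\theta_{i,n}|\le H$ control by $(L_i+HE_i)\|x^{i-1,n}-x^n\|$, with $\|x^{i-1,n}-x^n\|\le\sum_{j<i}\|x^{j,n}-x^{j-1,n}\|$. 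The \emph{curvature group} consists of $\eta_n^2\sum_i\|d_i^n\|^2\le2\eta_n^2\sum_i(L_i^2+H^2E_i^2)$ together with the negative terms $-\sum_i\|x^{i,n}-v^{i-1,n}\|^2$.

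The technical core of (i) is to trade the displacement error against the negative curvature terms. Writing $\delta_i:=\|x^{i,n}-x^{i-1,n}\|$ and applying the elementary bound $\|a\|^2\ge\beta\|a+b\|^2-\tfrac{\beta}{1-\beta}\|b\|^2$ ($\beta\in(0,1)$) with $a=x^{i,n}-v^{i-1,n}$, $b=v^{i-1,n}-x^{i-1,n}$ (so $a+b=x^{i,n}-x^{i-1,n}$ and $\|b\|^2=\eta_n^2\|d_i^n\|^2$) converts $-\sum_i\|x^{i,n}-v^{i-1,n}\|^2$ into $-\beta\sum_i\delta_i^2$ plus an $\mathcal{O}(\eta_n^2)$ remainder; choosing $\beta$ close enough to $1$ and then using Young's inequality $2\eta_n(L_i+HE_i)\delta_j\le\epsilon\delta_j^2+\tfrac1\epsilon\eta_n^2(L_i+HE_i)^2$ to absorb the displacement error leaves precisely $-\tfrac12\sum_i\delta_i^2$ and an $\mathcal{O}(\eta_n^2)$ term. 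Collecting every $\eta_n^2$ contribution and bounding it through $L=\sum_iL_i$ and $E=\sum_iE_i$ (using $\sum_iL_i^2\le L^2$, $\sum_iE_i^2\le E^2$) should reproduce the constant $4M(N+\tfrac1N)(L^2+E^2H^2)$.

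For (ii) I would argue exactly as in Theorem~\ref{thm-1-obj}(ii): discard the nonpositive $-\tfrac12\sum_i\delta_i^2$ in (i), rearrange to $2g(z)\eta_n(F(x^n)-F(z))\le\|x^n-z\|^2-\|x^{n+1}-z\|^2+4M(N+\tfrac1N)(L^2+E^2H^2)\eta_n^2$, sum over $n$, and invoke $\sum_n\eta_n^2<\infty$ and $g(z)>0$ to get $\sum_n\eta_n(F(x^n)-F(z))<\infty$; since $\sum_n\eta_n=\infty$, assuming $F(x^n)-F(z)\ge\varepsilon$ for all large $n$ gives a contradiction, hence $\liminf_n F(x^n)\le F(z)$. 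The step I expect to be the main obstacle is the conversion in the main term: the per-ratio subgradient steps naturally produce the \emph{individual} weights $g_i(z)$, whereas the statement requires the single global factor $g(z)=\sum_i g_i(z)$; reconciling the two is exactly where the uniform denominator bounds $0<N\le g_i(z)\le M$ and the structural hypothesis $m+N\le M$ must be brought to bear, and it is also the origin of the factor $M(N+\tfrac1N)$ in the final constant. Everything else is the standard incremental-subgradient bookkeeping.
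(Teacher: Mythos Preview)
Your plan—per-step estimate, telescope, Young-type trade-off of displacement error against the negative term—matches the paper's, but the execution diverges at the starting inequality. The paper does \emph{not} use merely that each $T_i$ is $1$-SQNE; it uses the full firmly nonexpansive identity $\|T_iv-z\|^2\le\langle T_iv-z,v-z\rangle$, which after polarization gives
\[
\|x^{i,n}-z\|^2\le\|x^{i-1,n}-z\|^2-\|x^{i,n}-x^{i-1,n}\|^2-2\eta_n\langle x^{i,n}-z,\,d_i^n\rangle .
\]
Thus the negative term is already $-\delta_i^2$ (no $\beta$-conversion of $-\|x^{i,n}-v^{i-1,n}\|^2$ is needed), and the inner product is anchored at $x^{i,n}$ rather than $x^{i-1,n}$. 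This creates \emph{two} displacements to bridge ($x^{i,n}\to x^{i-1,n}$ and then $x^{i-1,n}\to x^n$), each handled by Young's inequality with the specific scalings $\sqrt{4M/N}$ and $\sqrt{4MN}$; these two choices are precisely what produce the two summands in the factor $M(N+\tfrac1N)$. After Jensen on $\|x^n-x^{i-1,n}\|^2$ and summing over $i$, the residual negative coefficient is $-(1-\tfrac{m+N}{2M})$, and it is \emph{here}—not in the main-term conversion—that $m+N\le M$ is invoked, yielding the $-\tfrac12$. Your vague ``$\beta$ close to $1$'' and generic Young step would give a correct inequality but not the stated constant. Your argument for (ii) coincides with the paper's.

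On the step you flag as the main obstacle: the paper's proof simply writes the sum $\sum_i 2g_i(z)\eta_n\bigl(\tfrac{f_i(z)}{g_i(z)}-\theta_{i,n}\bigr)$ as $2g(z)\eta_n\bigl(F(z)-F(x^n)\bigr)$ in the final line without invoking the bounds $N\le g_i\le M$ at that stage; so while you are right that these two expressions are not equal in general, your conjecture that this is where $m+N\le M$ enters is off—the paper deploys that hypothesis solely in balancing the negative $\delta_i^2$ terms.
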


   \begin{proof} 
Let $z\in\bigcap_{i=1}^m\fix T_i$ and suppose that $\{x^{n}\} _{n = 1}^\infty$ is bounded. 
First, we shall prove (i).
By the firmly nonexpansivity of each $T_i$, we have
\begin{eqnarray}
    \|x^{i,n}-z\|^2
     &\le&\langle T_i(v^{i-1,n}) - z, v^{i-1,n} -z\rangle\nonumber\\
    &=&\frac{1}{2}\left(\|x^{i,n}-z\|^2 + \|x^{i-1,n}-z- \eta_n(f^\prime_i (x^{i-1,n}) + \theta_{i,n} h^\prime_i (x^{i-1,n}))\|^2  \right)\nonumber\\
    &&- \frac{1}{2}(\|x^{i,n}-x^{i-1,n}+ \eta_n(f^\prime_i (x^{i-1,n}) + \theta_{i,n} h^\prime_i (x^{i-1,n}))\|^2).\nonumber
     \end{eqnarray}
By rearranging the above, we obtain
\begin{eqnarray}\label{incre-eq1}
    \|x^{i,n}-z\|^2 
 &\le& \|x^{i-1,n}-z\|^2 - \|x^{i,n}-x^{i-1,n}\|^2 - 2\eta_n\langle x^{i,n} -z, f_i^\prime(x^{i-1,n})\rangle \nonumber\\
&& - 2\eta_n\theta_{i,n}\langle x^{i,n} -z, h_i^\prime(x^{i-1,n})\rangle.
\end{eqnarray}
Note that 
{ \begin{eqnarray}\label{incre-eq2}
    - 2\eta_n\langle x^{i,n} -z, f_i^\prime(x^{i-1,n})\rangle 
    &=& 2\left\langle \sqrt{\frac{4M}{N}}(\eta_nf_i^\prime(x^{i-1,n})), \sqrt{\frac{N}{4M}}(x^{i-1,n}-x^{i,n})\right\rangle \nonumber \\
     &&+ 2\eta_n\langle f_i^\prime(x^{i-1,n}), z - x^{i-1,n}\rangle\nonumber \\
     &\le& 2\eta_n\langle f_i^\prime(x^{i-1,n}), z - x^{i-1,n}\rangle + \frac{4ML_i^2}{N}\eta_n^2 
+ \frac{N}{4M}\|x^{i,n}-x^{i-1,n}\|^2.\nonumber\\
\end{eqnarray}  }
 The subgradient inequality of each $f_i$ implies that
{ \begin{eqnarray}\label{incre-eq3}
    2\eta_n\langle f_i^\prime(x^{i-1,n}), z - x^{i-1,n}\rangle 
    &\le& 2\eta_n(f_i(z)-f_i(x^n)) + 2\langle \eta_nf_i^\prime(x^n), x^n-x^{i-1,n}\rangle\nonumber\\
    &=& 2\eta_n(f_i(z)-f_i(x^n)) + 2\left\langle\sqrt{4MN}\eta_nf_i^\prime(x^n), \frac{1}{\sqrt{4MN}}(x^n-x^{i-1,n})\right\rangle\nonumber\\
    &\le& 2\eta_n(f_i(z)-f_i(x^n)) + 4MNL_i^2\eta_n^2 
    + \frac{1}{4MN}\|x^n-x^{i-1,n}\|^2.
\end{eqnarray}  }
By the triangle inequality, we have 
\begin{eqnarray*}
    \|x^n-x^{i-1,n}\|&\le& \|x^{0,n}-x^{1,n}\| + \|x^{1,n}-x^{2,n}\|+\cdots + \|x^{i-2,n}-x^{i-1,n}\|\\
    &=& \sum\limits_{j=1}^{i-1}\|x^{j,n}-x^{j-1,n}\|
    \le \sum\limits_{i=1}^{m}\|x^{i,n}-x^{i-1,n}\|.
\end{eqnarray*}
This relation and Jensen's inequality deduce that 
\begin{equation}\label{incre-eq4}
    \|x^n-x^{i-1,n}\|^2 \le N^2\left(\frac{\sum\limits_{i=1}^{m}\|x^{i,n}-x^{i-1,n}\|}{N}\right)^2\le N\sum\limits_{i=1}^{m}\|x^{i,n}-x^{i-1,n}\|^2.
\end{equation}
By substituting this into the inequality (\ref{incre-eq3}), the inequality (\ref{incre-eq2}) turns into
{ \begin{eqnarray}\label{incre-eq5}
   - 2\eta_n\langle x^{i,n} -z, f_i^\prime(x^{i-1,n})\rangle 
   &\le& 2\eta_n(f_i(z)-f_i(x^n)) + 4M\left(N+\frac{1}{N}\right)L_i^2\eta_n^2 \nonumber\\
   &&+ \frac{1}{4M}\sum\limits_{i=1}^{m}\|x^{i,n}-x^{i-1,n}\|^2  + \frac{N}{4M}\|x^{i,n}-x^{i-1,n}\|^2.
\end{eqnarray}  }
Now, following a similar argument as in the derivation of (\ref{incre-eq2}) and (\ref{incre-eq3}), we obtain
  {   \begin{eqnarray}\label{incre-eq6}
    - 2\eta_n\theta_{i,n}\langle x^{i,n} -z, h_i^\prime(x^{i-1,n})\rangle 
      &\le& 2\eta_n\theta_{i,n}\langle h_i^\prime(x^{i-1,n}), z - x^{i-1,n}\rangle + \frac{4ME_i^2\theta_{i,n}^2}{N}\eta_n^2\nonumber\\
    &&+ \frac{N}{4M}\|x^{i,n}-x^{i-1,n}\|^2.
\end{eqnarray}}
Moreover, since each $g_i$ is concave, we have that $-g_i$ is convex for all $i=1,2,\ldots,m$. Then, the subgradient inequality of $-g_i$ and the relation (\ref{incre-eq4}) imply that
{ \begin{eqnarray}
    2\eta_n\theta_{i,n}\langle h_i^\prime(x^{i-1,n}), z - x^{i-1,n}\rangle 
     &\le& 2\eta_n\theta_{i,n}(-g_i(z)+g_i(x^n)) + 4MNE_i^2\theta_{i,n}^2\eta_n^2 \nonumber\\
    &&+\frac{1}{4M}\sum\limits_{i=1}^{m}\|x^{i,n}-x^{i-1,n}\|^2.\nonumber
\end{eqnarray}  }
Substituting the above inequality into the relation (\ref{incre-eq6}), we have 
{ \begin{eqnarray}\label{incre-eq7}
    -2\eta_n\theta_{i,n}\langle x^{i,n} -z, h_i^\prime(x^{i-1,n})\rangle 
 &\le& 2\eta_n\theta_{i,n}(-g_i(z)+g_i(x^n)) + 4M\left(N+\frac{1}{N}\right)E_i^2\theta_{i,n}^2\eta_n^2 \nonumber\\
   && +\frac{1}{4M}\sum\limits_{i=1}^{m}\|x^{i,n}-x^{i-1,n}\|^2 + \frac{N}{4M}\|x^{i,n}-x^{i-1,n}\|^2.
 \end{eqnarray}  }
By combining the inequalities (\ref{incre-eq5}) and (\ref{incre-eq7}) with the relation (\ref{incre-eq1}), we obtain 
{ \begin{eqnarray}\label{incre-eq8}
     \|x^{i,n}-z\|^2 
     &\le& \|x^{i-1,n}-z\|^2 + 2g_i(z)\eta_n\left(\frac{f_i(z)}{g_i(z)}-\theta_{i,n}\right) +  4M\left(N+\frac{1}{N}\right)(L_i^2+E_i^2H^2)\eta_n^2  \nonumber\\
    && 
    + \frac{1}{2M}\sum\limits_{i=1}^{m}\|x^{i,n}-x^{i-1,n}\|^2 -\left( 1-\frac{N}{2M}\right)\|x^{i,n}-x^{i-1,n}\|^2,\nonumber
\end{eqnarray} }
where  $H= \sup\limits_{n\ge 1}|\theta_{i,n}|$.
Then, summing the above inequality over $i=1,2,\ldots,m$ leads to
{ \begin{eqnarray}
    \|x^{n+1}-z\|^2 
      &\le& \|x^n-z\|^2 + 2g(z)\eta_n\left(F(z)-F(x^n)\right) +  4M\left(N+\frac{1}{N}\right)(L^2+E^2H^2)\eta_n^2 \nonumber\\
    &&     - \frac{1}{2}\sum\limits_{i=1}^{m}\|x^{i,n}-x^{i-1,n}\|^2,\nonumber
    \end{eqnarray}  }
which is of the form of (i) with $E=\sum\limits_{i=1}^{m}E_i$ and $L=\sum\limits_{i=1}^{m}L_i$.

Next, we will prove (ii). The proof follows similar arguments as Theorem \ref{thm-1-obj}(ii), only we use Lemma \ref{incre-lm-1}(i) instead of using Lemma \ref{lm-1-bdd}, to continue with the proof.   
\end{proof}

The following is the convergence theorem of the IFSSM in the setting of Problem (\ref{P-Incremental}).

\begin{theorem}\label{incre-main-thm}
    Let $\{x^n\} _{n = 1}^\infty$ be a sequence generated by the IFSSM. 
Suppose that Assumption \ref{inc-assumption} holds. Then, there exists a subsequence of the sequence $\{x^n\}_{n=1}^\infty$ that converges to a point $x^*$ in $\mathcal{Z}$.
\end{theorem}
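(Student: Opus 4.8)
The plan is to mimic the two-case argument in the proof of Theorem~\ref{main-thm-bdd}, with Lemma~\ref{incre-lm-1} playing the role of Lemma~\ref{lm-1-bdd} and the single operator $T$ replaced by the family $\{T_i\}_{i=1}^m$. Two preliminary observations drive the argument. First, by (B1) each $T_i$ is firmly nonexpansive, hence nonexpansive, and since $\fix T_i\neq\emptyset$ it satisfies the demi-closedness principle. Second, because each $f_i$ is convex and each $g_i$ is concave and strictly positive on the relevant set, every ratio $f_i/g_i$ is continuous, so $F=\sum_{i=1}^m f_i/g_i$ is continuous. I fix $z\in\bigcap_{i=1}^m\fix T_i$ and split into cases according to the eventual monotonicity of $\{\|x^n-z\|^2\}_{n=1}^\infty$.

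In Case~1, where $\{\|x^n-z\|^2\}_{n\ge n_0}$ is nonincreasing for every such $z$, the limit $\lim_{n\to\infty}\|x^n-z\|^2$ exists. Setting $B_1:=\sup_{n\ge1}|F(z)-F(x^n)|$, which is finite by Assumption~\ref{inc-assumption}(ii) and the continuity of $F$, I would rearrange Lemma~\ref{incre-lm-1}(i) into
\[
\tfrac12\sum_{i=1}^m\|x^{i,n}-x^{i-1,n}\|^2\le\|x^n-z\|^2-\|x^{n+1}-z\|^2+2g(z)B_1\eta_n+C\eta_n^2,
\]
with $C:=4M(N+\tfrac1N)(L^2+E^2H^2)$, and let $n\to\infty$. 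Since $\eta_n\to0$ and the difference $\|x^n-z\|^2-\|x^{n+1}-z\|^2\to0$, the right-hand side tends to $0$, whence $\|x^{i,n}-x^{i-1,n}\|\to0$ for each $i$. Telescoping gives $\|x^{i,n}-x^n\|\to0$, and since $\eta_n\to0$ with bounded subgradients (so $\|v^{i-1,n}-x^{i-1,n}\|\to0$), I deduce $\|(T_i-\Id)v^{i-1,n}\|=\|x^{i,n}-v^{i-1,n}\|\to0$ for every $i$. Using Lemma~\ref{incre-lm-1}(ii) I pick a subsequence along which $F(x^{n_k})\to\liminf_n F(x^n)\le F(z)$, then pass to a further subsequence $\{n_{k_j}\}$ with $x^{n_{k_j}}\to x^*$; all intermediate points $x^{i,n_{k_j}}$ and $v^{i-1,n_{k_j}}$ then share this limit. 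The demi-closedness principle applied to each $T_i$ yields $x^*\in\fix T_i$ for all $i$, hence $x^*\in\bigcap_{i=1}^m\fix T_i$, and continuity of $F$ gives $F(x^*)\le F(z)$, i.e.\ $x^*\in\mathcal{Z}$. Since $\lim_n\|x^n-z\|$ exists for every $z\in\bigcap_{i=1}^m\fix T_i$, the Opial argument used at the close of Case~1 in Theorem~\ref{main-thm-bdd} excludes a second subsequential limit in $\mathcal{Z}$ and delivers $x^{n_k}\to x^*$.

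In Case~2, I invoke Lemma~\ref{paul} to construct $\nu(n)$ with $\|x^{\nu(n)}-z\|^2\le\|x^{\nu(n)+1}-z\|^2$ and $\|x^n-z\|^2\le\|x^{\nu(n)+1}-z\|^2$. Feeding the first inequality into Lemma~\ref{incre-lm-1}(i) forces $\sum_{i=1}^m\|x^{i,\nu(n)}-x^{i-1,\nu(n)}\|^2\to0$ and $\limsup_n F(x^{\nu(n)})\le F(z)$; the same extraction-plus-demi-closedness reasoning produces a subsequence $x^{\nu(n_k)}\to x^*\in\mathcal{Z}$, and the second inequality $\|x^n-z\|^2\le\|x^{\nu(n)+1}-z\|^2$ transfers this convergence back to a subsequence of $\{x^n\}_{n=1}^\infty$. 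The main obstacle I anticipate lies in the incremental bookkeeping of both cases: the single-operator displacement $\|Ty^n-y^n\|$ of Theorem~\ref{main-thm-bdd} is replaced by the full chain of intermediate displacements $\|x^{i,n}-x^{i-1,n}\|$, and I must simultaneously extract the aggregated sum $\sum_i\|x^{i,n}-x^{i-1,n}\|^2\to0$ from Lemma~\ref{incre-lm-1}(i) — exactly where firm nonexpansiveness and the step-size estimates of that lemma are essential — and perform a nested subsequence extraction so that every $T_i$ can be tested for demi-closedness at one and the same limit point $x^*$. Verifying that all $m$ intermediate iterates collapse to this common $x^*\in\bigcap_{i=1}^m\fix T_i$ is the technical heart of the argument.
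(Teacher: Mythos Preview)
Your proposal is correct and follows essentially the same two-case argument as the paper, using Lemma~\ref{incre-lm-1}(i) to force $\sum_i\|x^{i,n}-x^{i-1,n}\|^2\to0$, then propagating convergence through the chain of intermediate iterates and applying the demi-closedness principle of each $T_i$ in turn. The only cosmetic difference is that you test demi-closedness of $T_i$ along the shifted points $v^{i-1,n}$ (showing $\|(T_i-\Id)v^{i-1,n}\|\to0$), whereas the paper uses nonexpansivity of $T_i$ to transfer this to $\|T_ix^{i-1,n}-x^{i-1,n}\|\to0$; since $\|v^{i-1,n}-x^{i-1,n}\|\to0$ both routes reach the same conclusion.
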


\begin{proof}
    Let $z\in\bigcap_{i=1}^m\fix T_i$.
				Suppose that the sequence $\{x^{n}\} _{n = 1}^\infty$ is bounded.
			To prove the convergence of the theorem, we consider the following two cases
   of the sequence $\{\|x^n-z\|^2\}_{n=1}^\infty$ due to its behavior:
			
	\indent\textbf{Case 1.} Assume that there is an ${n_0}\in\mathbb{N}$ such that $\{\|x^n-z\|^2\}_{n\geq n_0}$ is nonincreasing for all $z\in\bigcap_{i=1}^m\fix T_i$. Then,  we have that $\mathop {\lim }\limits_{n \to \infty }{\|x^n-z\|^2}$ exists. 
    Put $F_1:=\sup\limits_{n\ge 1}\left|F(z) - F(x^n)\right|$. By Lemma \ref{incre-lm-1}(i), we obtain that
{ \begin{eqnarray}
\label{m-ic-1}
  \limsup\limits_{n\rightarrow\infty} \sum\limits_{i=1}^{m}\|x^{i,n}-x^{i-1,n}\|^2 &\le& 2\lim\limits_{n\rightarrow\infty}\|x^n-z\|^2 - 2\lim\limits_{n\rightarrow\infty}\|x^{n+1}-z\|^2+ 4g(z)F_1\lim\limits_{n\rightarrow\infty}\eta_n\nonumber\\
  &&+  8M\left(N+\frac{1}{N}\right)(L^2+E^2H^2)\lim\limits_{n\rightarrow\infty}\eta_n^2. 
\end{eqnarray} }
The assumption that $\sum\limits_{n=1}^{\infty}\eta_n^2 < \infty$ leads to
\begin{equation}\label{m-ic-5}
\lim\limits_{n\rightarrow\infty}\|x^{i,n}-x^{i-1,n}\|=0,
\end{equation}
for all $i=1,2,\ldots,m$. This, along with the nonexpansivity of each $T_i$ and the fact that $\eta_n\to0$, leads to
\begin{equation}\label{m-ic-52}
\lim\limits_{n\rightarrow\infty}\|T_ix^{i-1,n}-x^{i-1,n}\|=0,
\end{equation}
for all $i=1,2,\ldots,m$. 
Due to the boundedness of $\{x^{n}\}_{n=1}^\infty$ and Lemma \ref{incre-lm-1}(ii), there exists a subsequence $\{x^{n_k}\}_{k=1}^\infty$ of $\{x^{n}\}_{n=1}^\infty$ such that 
   \begin{equation}\label{m-ic-eq6}
\lim\limits_{k\rightarrow\infty} F(x^{n_k}) = \liminf\limits_{n\rightarrow\infty}F(x^{n}) \le F(z),
\end{equation}
for all  $z\in\bigcap_{i=1}^m\fix T_i$. 
Since $\{x^{n_k}\}_{k=1}^\infty$ is bounded, there exists a subsequence $\{x^{n_{k_j}}\}_{j=1}^\infty$ of $\{x^{n_k}\}_{k=1}^\infty$ such that $x^{n_{k_j}}\to x^*\in\R^k.$ We will now show that $x^*\in\bigcap_{i=1}^m\fix T_i$. 
Since $x^{n_{k_j}}\to x^*$, it follows that $x^{0,n_{k_j}}\to x^*$. By using this and the relation (\ref{m-ic-52}), the demi-closedness principle of $T_1$ implies that 
$x^*\in\fix T_1$. Furthermore, since $x^{0,n_{k_j}}\to x^*$ and by the relation (\ref{m-ic-5}), we obtain that $x^{1,n_{k_j}}\to x^*$. By applying this and the relation (\ref{m-ic-52}), the demi-closedness principle of $T_2$ yields that 
 $x^*\in\fix T_2$. Additionally, from $x^{1,n_{k_j}}\to x^*$  and the relation (\ref{m-ic-5}), we have that
$x^{2,n_{k_j}}\to x^*$. By using these same arguments for all $i$, we conclude that $x^*\in\fix T_i$ for all $i=1,2,\ldots,m$. Thus,  $x^*\in\bigcap_{i=1}^m\fix T_i$.
Furthermore, the continuity of $\frac{f}{g}$ and the relation (\ref{m-ic-eq6}), we obtain that
\begin{equation*}
F(x^*)=\lim\limits_{j\rightarrow\infty} F(x^{n_{k_j}}) \le F(z),
\end{equation*}
for all $z\in\bigcap_{i=1}^m\fix T_i$. That is, $x^*\in\mathcal{Z}$. Finally, we shall prove that $x^{n_k}\to x^*\in \mathcal{Z}$. Since $\{x^{n_k}\}_{k=1}^\infty$ is bounded, it is enough to prove that there is no subsequence $\{x^{n_{k_l}}\}_{l=1}^\infty$ of  $\{x^{n_k}\}_{k=1}^\infty$  such that $\mathop {\lim }\limits_{l \to \infty }x^{n_{k_l}}=\bar{x}\in \mathcal{Z}$ and $\bar{x}\ne x^*.$ To show this, the proof follows similar arguments to those used in {\textbf{Case 1}} of Theorem \ref{main-thm-bdd}.
Therefore, there exists a subsequence of $\{x^n\}_{n=1}^\infty$ that converges to a point in $\mathcal{Z}$.
   
		
			\textbf{Case 2.}
		Assume that there is a point $z\in \bigcap_{i=1}^m\fix T_i$ and a subsequence $\{x^{n_k}\}_{k=1}^\infty$ of $\{x^n\}_{n=1}^\infty$ such that $\|x^{n_k}-z\|^2<\|x^{n_k+1}-z\|^2$ for all $k\in\N$.
    Let $\{\nu(n)\}_{n=1}^\infty$ be defined as in Lemma \ref{paul}. Then, for all $n\ge n_0$, it holds that
				\begin{equation}\label{2-m-ic-eq7}
				\|x^{\nu(n)}-z\|^2\le \|x^{\nu(n)+1}-z\|^2,
				\end{equation}
				and
				\begin{equation}\label{2-m-ic-eq8}
				\|x^{n}-z\|^2\le \|x^{\nu(n)+1}-z\|^2.
				\end{equation}
		By putting $F_2:=\sup\limits_{n \ge 1}\left|F(z) - F(x^{\nu(n)})\right|$, Lemma \ref{incre-lm-1}(i) and the inequality (\ref{2-m-ic-eq7}) imply that
{\begin{eqnarray*}
\frac{1}{2}\sum\limits_{i=1}^{m}\|x^{i,\nu(n)}-x^{i-1,\nu(n)}\|^2 &\le& \|x^{\nu(n)}-z\|^2 - \|x^{{\nu(n)}+1}-z\|^2+ 2g(z)\eta_{\nu(n)}(F(z)-F(x^{\nu(n)}))\nonumber\\
  &&+  4M\left(N+\frac{1}{N}\right)(L^2+E^2H^2)\eta_{\nu(n)}^2.\\
    &\le& 2g(z)F_2\eta_{\nu(n)} +  4M\left(N+\frac{1}{N}\right)(L^2+E^2H^2)\eta_{\nu(n)}^2.
\end{eqnarray*}  }
Since $\sum\limits_{n=1}^{\infty}\eta_{\nu(n)}^2 < \infty$, we obtain that, for all $i=1,2,\ldots,m$,
\begin{equation}\label{2-m-ic-5}
\lim\limits_{n\rightarrow\infty}\|x^{i,\nu(n)}-x^{i-1,\nu(n)}\|=0.
\end{equation}
 By utilizing this, along with the nonexpansivity of each $T_i$ and the fact that  $\eta_{\nu(n)}\to0$,
 we derive that
\begin{equation}\label{2-m-ic-52}
\lim\limits_{n\rightarrow\infty}\|T_ix^{i-1,\nu(n)}-x^{i-1,\nu(n)}\|=0,
\end{equation}
for all $i=1,2,\ldots,m$. 
Moreover, Lemma \ref{incre-lm-1}(i) and the relation (\ref{2-m-ic-eq7}) yield that 
\begin{eqnarray*}  2g(z)\eta_{\nu(n)}(F(x^{\nu(n)})-F(z))     &\le& 4M\left(N+\frac{1}{N}\right)(L^2+E^2H^2)\eta_{\nu(n)}^2.
\end{eqnarray*}
Since $\{\eta_{\nu(n)}\}_{n=1}^\infty\subset(0,\infty)$ and 
$\eta_{\nu(n)}\to0$, we deduce that
\begin{equation}\label{2-eq10}
\limsup\limits_{n\rightarrow\infty}F(x^{\nu(n)})\le F(z).
\end{equation}
Choose a subsequence  $\{x^{\nu(n_k)}\}_{k=1}^\infty$ of $\{x^{\nu(n)}\}_{n=1}^\infty$ arbitrarily. From the above, we obtain 
\begin{equation}\label{2-eq11}
\limsup\limits_{k\rightarrow\infty}F(x^{\nu(n_k)})\le \limsup\limits_{n\rightarrow\infty}F(x^{\nu(n)})\le F(z).
\end{equation}
Furthermore, since $\{x^{\nu(n_k)}\}_{k=1}^\infty$ is bounded, there is a subsequence $\{x^{\nu(n_{k_j})}\}_{j=1}^\infty$ of $\{x^{\nu(n_k)}\}_{k=1}^\infty$ such that $x^{\nu(n_{k_j})}\to x^*\in\R^k.$ 
By following similar arguments to those used in proving that $x^*\in\bigcap_{i=1}^m\fix T_i$ in \textbf{Case 1} along with the inequalities (\ref{2-m-ic-5}) and (\ref{2-m-ic-52}), we obtain that
$x^{\nu(n_{k_j})}\to x^*\in\bigcap_{i=1}^m\fix T_i.$ 
Then, the lower semicontinuity of $F$, the boundedness of $\{F(x^{\nu(n_{k_j})})\}_{j=1}^\infty$ and the inequality (\ref{2-eq11})  lead to 
\begin{equation*}
F(x^*)\le\liminf\limits_{j\rightarrow\infty} F(x^{\nu(n_{k_j})}) \le \limsup\limits_{j\rightarrow\infty} F(x^{\nu(n_{k_j)}})\le F(z),
\end{equation*}
for all $z\in \bigcap_{i=1}^m\fix T_i$. Thus, we have $x^*\in \mathcal{Z}.$ Now, since $x^{\nu(n_{k_j})}\to x^*$, in view of (\ref{2-m-ic-eq8}), we have
\begin{equation*}
0\le\limsup\limits_{j\rightarrow\infty}\|x^{n_{k_j}}-x^*\| \le\limsup\limits_{j\rightarrow\infty}\|x^{\nu(n_{k_j})}-x^*\|=0.
\end{equation*}
Hence, this concludes that $\lim\limits_{j\rightarrow\infty} x^{n_{k_j}}=x^*\in \mathcal{Z}$. Therefore, we obtain that there exists a subsequence of $\{x^n\}_{n=1}^\infty$ that converges to a point in $\mathcal{Z}$. This completes the proof.  
\end{proof}

\section{Numerical Examples}\label{sect-numerical}

In this section, we present three numerical examples to illustrate the performance of the proposed methods in solving various types of fractional programming problems. 
 All experiments were implemented in MATLAB R2025a and executed on a MacBook Pro (14-inch, 2021) with an Apple M1 Pro chip and 16 GB memory.

\subsection{Minimizing the Quadratic-to-Linear Ratio over the Underdetermined System of Equations}

In this example, we consider the following smooth quaratic-linear fractional programming problem:
\begin{eqnarray}\label{FP-ex1}
\mathrm{minimize} &&\frac{0.5\< x,Qx\>}{\<s,x\>}\nonumber \\
{  \mathrm{subject} \hspace{0.1cm}  \mathrm{ to}} && Ax= b, \\
              && x\in [10^{-8},10^8]^k \nonumber
\end{eqnarray}
where $Q$ is an $k\times k$ positive definite symmetric matrix defined by $Q:=P^\top P+kI_n,$ where the $k\times k$ matrix $P$ is randomly generated in $(0,1)$, and $I_n$ is the identity $k\times k$ matrix. The entries of vectors $s$ and $b$ are randomly generated in $(0,1)$.
The matrix $A$ is an $m\times k$ matrix, with $m<k$, where the entries are randomly generated in $(0,1)$.
Note that the matrix $A$ is the $m\times k$ matrix with $m<k$, we have the system $Ax=b$ is the underdetermined system and so such system has infinitely many solutions.

In this experiment, we compare the performance of the FSSM with Dinkelbach's work, the DA \cite{D67}, and the PGA proposed by Bo\c{t} and Csetnek \cite{BotC17} in solving the problem (\ref{FP-ex1}).
For the FSSM, we consider both constant (FSSM-C) and diminishing (FSSM-D) step sizes. We set the possibly fine-tuned selected step sizes for FSSM-C and FSSM-D as \( \eta_n = 5.1\times10^{-5}\) and \( \eta_n = \frac{8.9\times10^{-5}}{n+1} \), respectively. Since the DA and the PGA require solving subproblems, we use the HSDM proposed by Yamada \cite{Y01} to address them for each iteration $n$.
We define the main operator $T$ used in the FSSM and the HSDM as $T:=P_{C_2}P_{C_1}$, where $C_1:=\{x\in\R^k: Ax=b\},$ and  $C_{2}:=\{x\in\R^k:x\in [10^{-8},10^8]^k\}$ is the box constraint.  
Note that, for the HSDM, the parameter \( \alpha_j \) for the DA and the PGA were empirically selected as \( \alpha_j = \frac{3\times 10^{-6}}{1 + \theta_n}\) and  \( \alpha_j = \frac{10^{-4}}{1 + 4\|A\|\theta_n}\), respectively. We set the number of inner-loop iterations for the DA and the PGA is fixed at $10$. We put the initial vector for these three tested methods as the $k\times1$ vector whose the all components is $0.1$. We performed the experiment using $100$ samples for each randomly generated matrices and vectors with $k=1000$ and $m=50$, and the results given in Figure \ref{fig:ql} were averaged across these trials.
	\begin{figure}[H]
			\begin{center}
				\subfigure{
					\resizebox*{5.2cm}{!}{\includegraphics{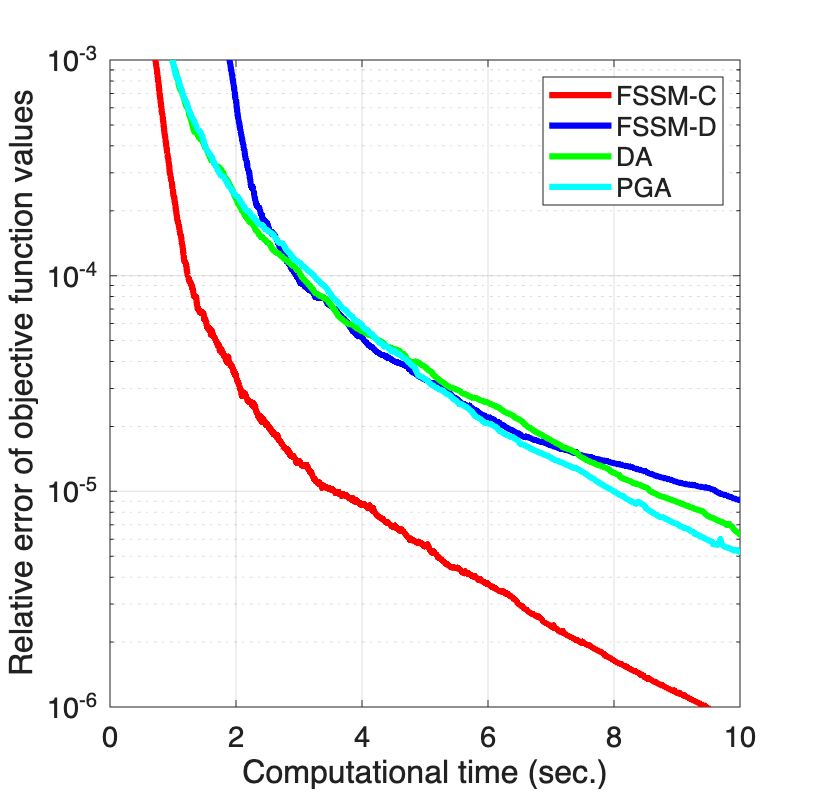}}}
                    \subfigure{
					\resizebox*{5.2cm}{!}{\includegraphics{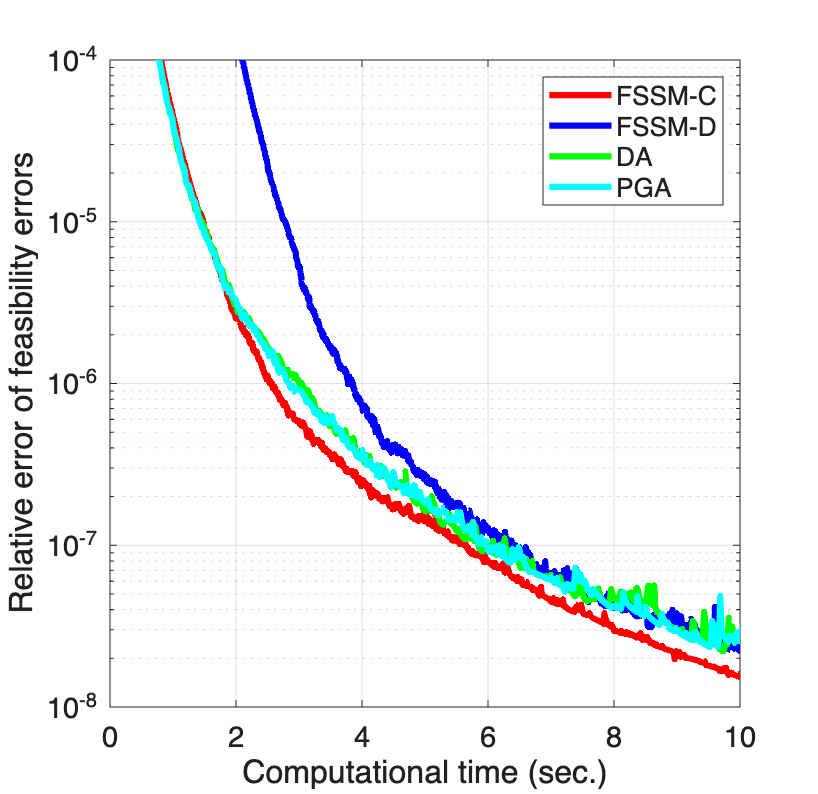}}}
				\subfigure{
					\resizebox*{5.2cm}{!}{\includegraphics{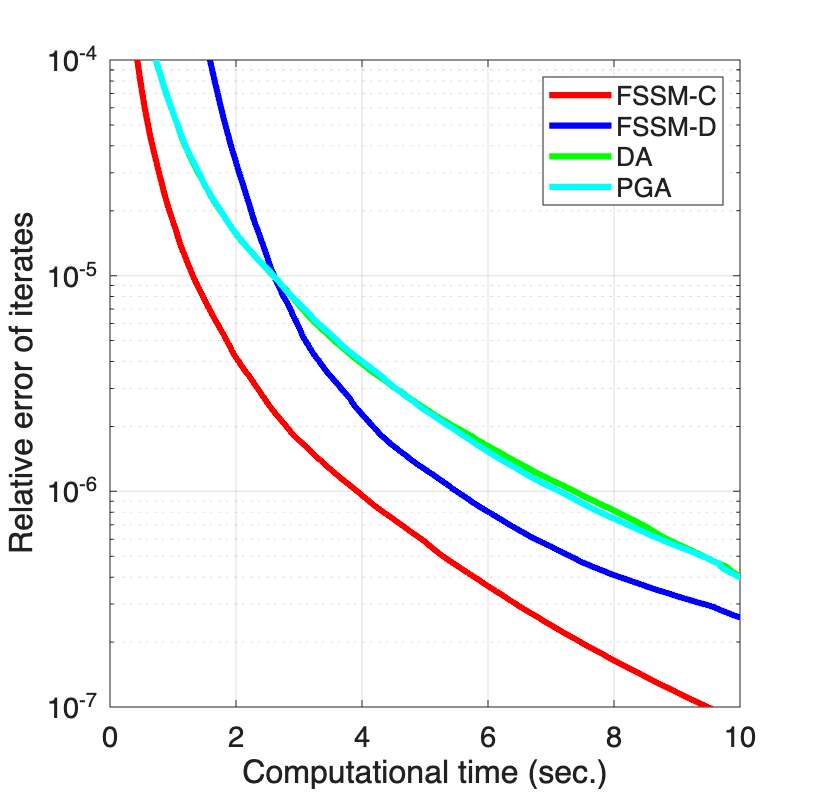}}}
				\caption{Results from  FSSM, DA \cite{D67} and PGA \cite{BotC17}                
                within 10 seconds}
				\label{fig:ql}
			\end{center}
		\end{figure}

From Figure \ref{fig:ql}, we illustrate the performance in terms of the relative errors of objective function values, feasibility errors, and iterates. The relative error of objective function values is defined as 
\(
\frac{\left|\theta_{n+1} - \theta_n\right|}{\theta_n +1},
\)
the relative error of feasibility errors is defined as 
\(
\frac{\|Ax^{n+1} - b\| - \|Ax^{n} - b\|}{\|Ax^{n} - b\|+1}
\), and the relative error of  iterates is defined as 
\(
\frac{\|x^{n+1} - x^{n}\|}{\|x^{n}\|+1}
\).
As observed, under the experimental setup, the FSSM-C demonstrates the best performance compared to the FSSM-D, the DA, and the PGA by achieving the least relative errors of all types. Moreover, both the FSSM-C and FSSM-D achieve lower relative errors in the iterates than the DA and the PGA. This highlights the advantage of the proposed method.

\subsection{Minimizing the Cost-to-Profit Ratio over the Constraints on Funding Levels}

In this example, we consider a variation of the profit-to-cost maximization model, which incorporates a Cobb–Douglas production function in the denominator. This model was originally introduced by Bradley and Frey \cite{BF74}, and was later considered by Hu et al. \cite{HYS15} and Hishinuma and Iiduka \cite{HI20}. The objective of the original model is to maximize production efficiency, defined as the ratio of profit to cost, subject to funding-level constraints. 
In our experiment, rather than maximizing production efficiency, we minimize the cost-to-profit ratio, which measures the amount of expenditure required to generate one unit of profit. Minimizing this ratio corresponds to improving cost-efficiency and is mathematically equivalent to maximizing production efficiency from an inverse perspective, provided the ratio remains positive. This formulation aligns with practical concerns such as budget constraints, cost-reduction goals, and sustainable resource management, which organizations may face when aiming to optimize resource use while maintaining profitability.

Let \( l = 1,2\ldots, p \) denote a set of projects, and let \( j = 1,2, \ldots, k \) denote a set of production factors. The variables $x^j$ represent the production factors. We consider the following problem: 
\begin{equation} \label{1ex1-P-Incremental}\displaystyle
 \tag{CPP}
	\begin{array}{ll}
\mathrm{minimize}\hskip0.2cm\indent  \displaystyle \frac{\sum_{j=1}^k c_{j}x^j + c_0}{a_0\prod_{j=1}^k (x^{j})^{a_j}}\\
	\mathrm{subject \hskip0.2cm to}\indent \displaystyle  \underline{q}_l \leq \sum_{j=1}^k b_{lj}x^j \leq  \overline{q}_l, \indent l=1,\ldots,p,\\ 
    \indent\hspace{1.8cm} 
    x^{j}\in [10^{-8},10^8], \indent j=1,\ldots,k,
	\end{array}  
 \end{equation}
where, for \( l = 1, \ldots, p \) and \( j = 1, \ldots, k \), the parameters are randomly generated as follows:
\( c_j \) and \( a_j \) are randomly generated in the interval \((0, k)\) such that $\sum_{j=1}^ka_j=1$; \( c_0 \) and \( a_0 \) are in \((1, 10)\); \( b_{lj} \) are in \((0, 1)\); and the lower and upper bounds \(\underline{q}_l\) and \(\overline{q}_l\) lie in \(\left(0, 25\|b_l\|\right)\) and \(\left(75\|b_l\|, 100\|b_l\|\right)\), respectively, where \( b_l := (b_{l1}, b_{l2}, \ldots, b_{lk}) \).

According to Problem (\ref{1ex1-P-Incremental}), the numerator of the objective function represents the total cost, formulated as a linear (and therefore convex) function of the investment levels in the various projects.
The denominator is a Cobb–Douglas production function representing the total profit generated across all projects, and it is concave \cite[Theorem 2.4.1]{CM09}. 
Since the objective is to minimize the ratio of a positive convex function to a positive concave function, this problem fits within the framework of Problem (\ref{main-FP}).

 In what follows, we apply the proposed FSSM to solve Problem (\ref{1ex1-P-Incremental}) and compare its performance with the {\it fixed point quasiconvex subgradient method} (FPQSM) \cite[Algorithm 1]{HI20}. 
In this experiment, we consider four variants of the FSSM, differing in the operator \( T \) (cyclic or simultaneous) and the step size \( \eta_n \) (constant or diminishing): FSSM-C-C (cyclic \( T \), constant \( \eta_n \)), FSSM-C-D (cyclic \( T \), diminishing \( \eta_n \)), FSSM-S-C (simultaneous \( T \), constant \( \eta_n \)), and FSSM-S-D (simultaneous \( T \), diminishing \( \eta_n \)). We also consider two variants of the FPQSM based on different step sizes: FPQSM-C (constant \( \eta_n \)) and FPQSM-D (diminishing \( \eta_n \)).

Let $x := (x_j)_{j=1}^k \in \mathbb{R}^k$. The operator $T$ is defined as follows: for the FSSM-C variants (cyclic update), we use $T := P_{C_{2p+1}} P_{C_{2p}} \cdots P_{C_1}$; and for the FSSM-S variants (simultaneous update) and the FPQSM, we use $T := P_{C_{2p+1}} \left( \frac{1}{2p} \sum_{l=1}^{2p} P_{C_l} \right)$, 
where each \( C_l \subset \mathbb{R}^k \) is a closed convex set. Specifically, for \( l = 1, \ldots, p \), we define 
\( C_l := \{x \in \mathbb{R}^k : \langle -b_l, x \rangle \leq -\underline{q}_l\} \), 
and for \( l = p+1, \ldots, 2p \), 
\( C_l := \{x \in \mathbb{R}^k : \langle b_{l - p}, x \rangle \leq \overline{q}_{l - p}\} \). 
The final set is the box constraint, \( C_{2p+1} := \{x \in \mathbb{R}^k : x \in [10^{-8},10^8]^k\} \).

Firstly, we consider the behavior and performance of the FSSM and the FPQSM when $k=p=500$.
We chose $\eta_n = \frac{0.1}{k}$ for the FSSM-C-C, FSSM-S-C and FPQSM-C, and $\eta_n = \frac{0.1}{n+1}$ for the FSSM-C-D, FSSM-S-D and FPQSM-D, with \(\alpha_n = 0.5\) used for both variants of the FPQSM.  We initialized the vector with $k$ components, all set to $1$, i.e., a 
$k\times1$ vector of ones. We performed 10 independent tests and the averaged results are shown in Figure \ref{ex2-plot-result},
 in which the relative error of objective function values 
and the relative error of the iterates are defined as the above example,  
and the relative error of feasibility errors is defined as 
\( 
\frac{|\mathrm{FE}(x^{n+1})| - |\mathrm{FE}(x^{n})|}{\mathrm{FE}(x^{n+1})+1}
\), 
where the feasibility error is defined by $\mathrm{FE}(x^{n}):=\frac{1}{2p}\left(\sum_{l=1}^p\max\{\<-b_l,x^n\> +\underline{q}_l,0\} + \sum_{l=1}^p\max\{\<b_{l},x^n\> - \overline{q}_{l},0\}\right).$

\begin{figure}[H]
			\begin{center}
				\subfigure{
					\resizebox*{5.2cm}{!}{\includegraphics{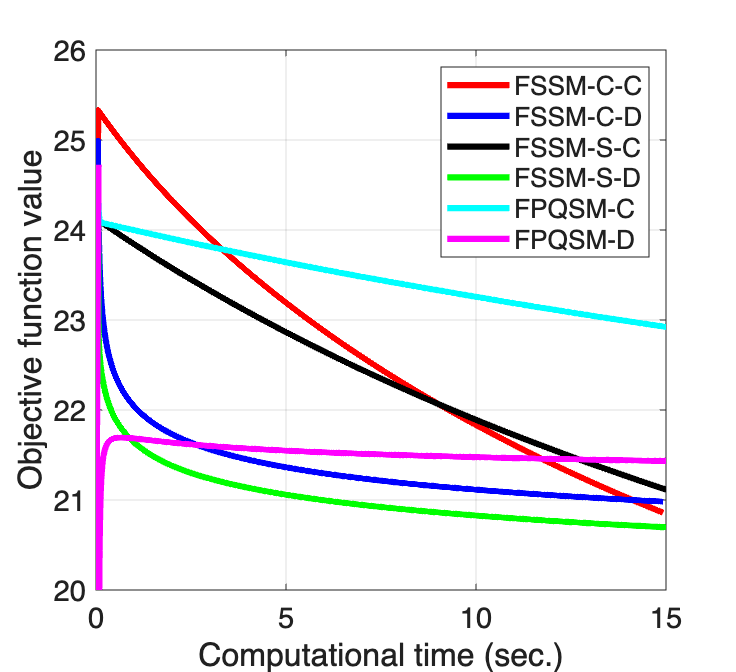}}}
				\subfigure{
					\resizebox*{5.2cm}{!}{\includegraphics{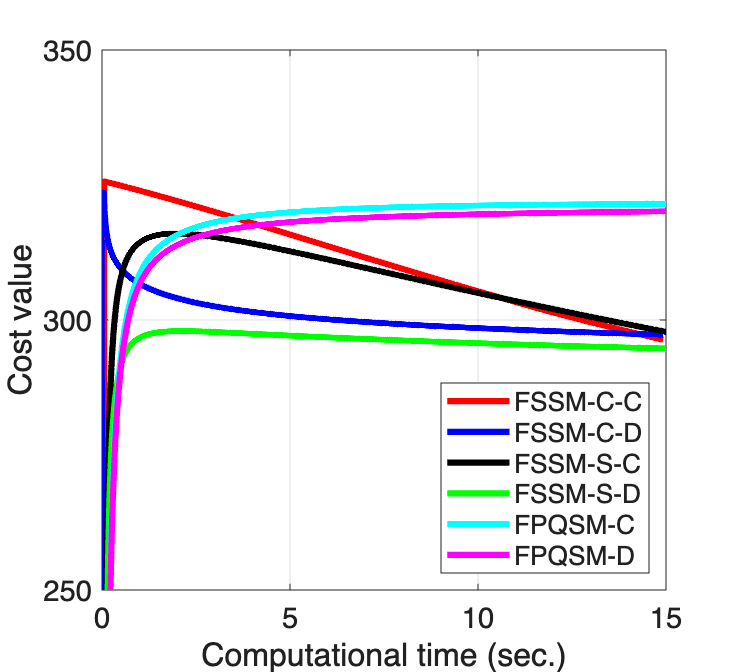}}}
                    \subfigure{
					\resizebox*{5.2cm}{!}{\includegraphics{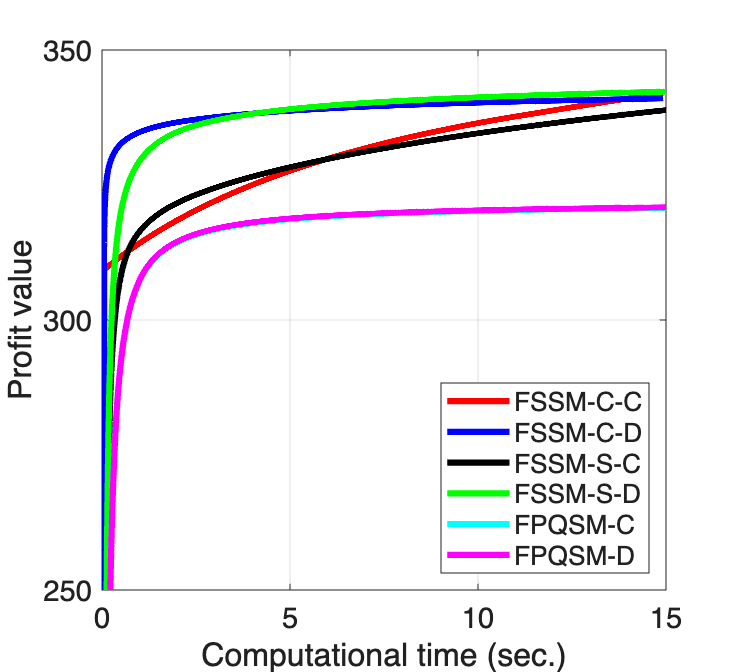}}}
                    \subfigure{
					\resizebox*{5.2cm}{!}{\includegraphics{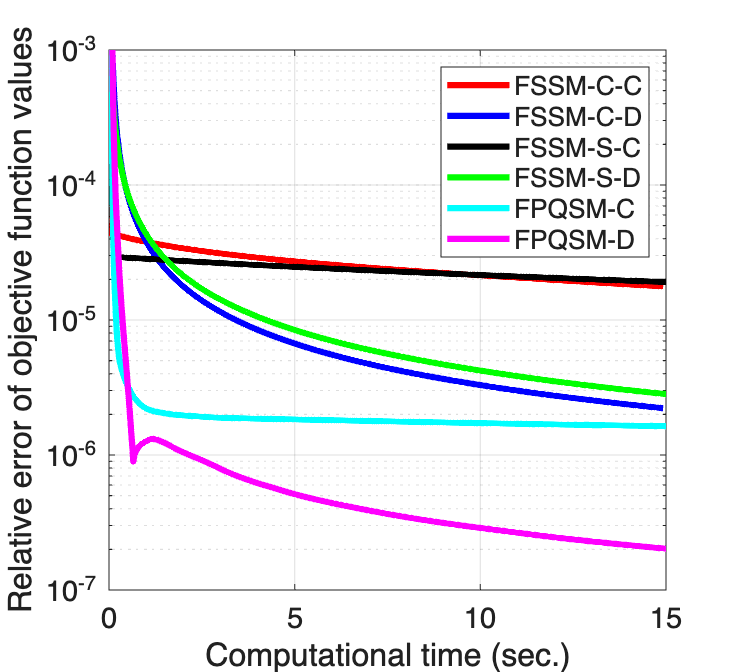}}}
				\subfigure{
					\resizebox*{5.2cm}{!}{\includegraphics{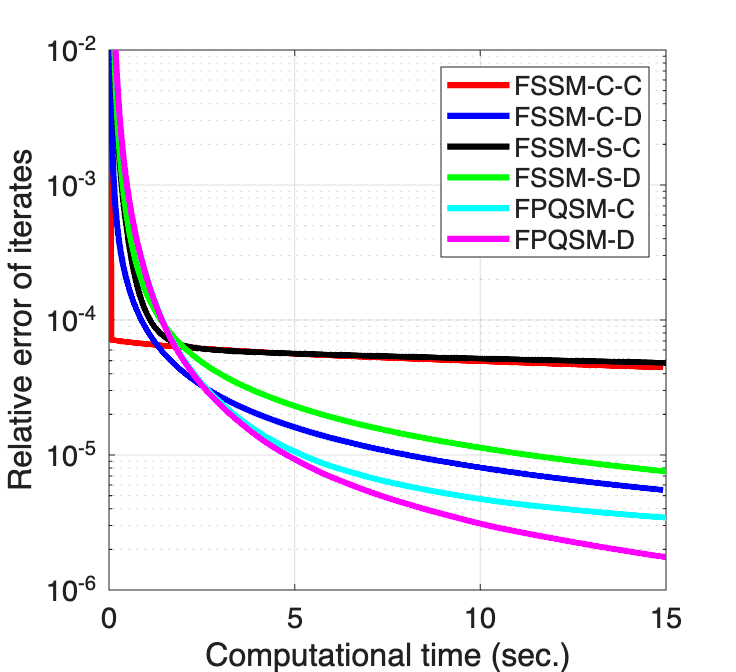}}}
                    \subfigure{
					\resizebox*{5.2cm}{!}{\includegraphics{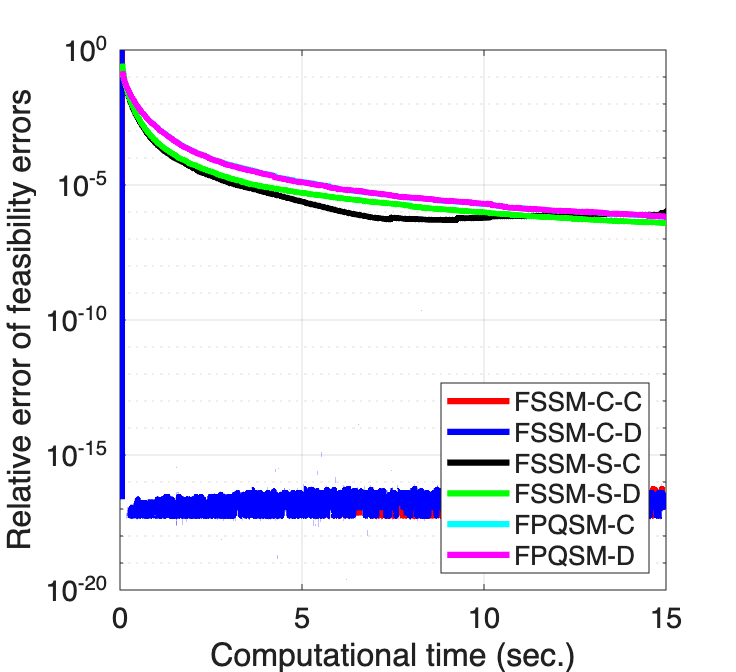}}}
				\caption{Results for various cases of FSSM and FPQSM}
				\label{ex2-plot-result}
			\end{center}
		\end{figure}

According to Figure \ref{ex2-plot-result}, under the experimental setup, all four variants of our algorithm successfully reduce costs and increase profits while maintaining low objective function values, aligning well with the optimization goals. In particular, all four variants of our method achieve lower costs and higher profits, and yield the lowest objective function values compared to the FPQSM.

Next, we examine the behavior and performance of all variants of our method and of the FPQSM across varying numbers of $k$ and $p$. We chose \(\eta_n = \frac{0.1}{k}\) for the FSSM-C-C, FSSM-S-C, and FPQSM-C, and \(\eta_n = \frac{0.5}{n+1}\) for the FSSM-C-D, FSSM-S-D, and FPQSM-D, with \(\alpha_n = 0.5\) used for both variants of the FPQSM.
We initialized the vector with \(k\) components, all set to 1, i.e., a \(k \times 1\) vector of ones. We performed 100 independent tests, each running for 10,000 iterations. 

The results, averaged over these trials, are presented in Table \ref{ex2-tab:comparison}, in which {\it Obj.} refers to the objective function values, {\it Rel. Obj.} refers to the relative error of objective function values, {\it Rel. Iter.} refers to the
relative error of iterates, and {\it Rel. Fea.} refers to the relative error of
feasibility errors. All of these quantities are defined as above.

\begin{table}[H]
{\small	\centering 
    \begin{tabular}{l l l r  r r  r r  r r r  r }	\toprule				
$k$ 	&	$p$	&	Method	&	Obj.     	&	Cost	&	Profit    	&	Rel. Obj.      	&	Rel. Iter.            	&	Rel. Fea.       	&	Time	\\  \midrule 
10	&	10	&	FSSM-C-C         	&	3.50	&	28.17	&	40.44	&	1.59e-06	&	2.07e-05	&	8.97e-17	&	0.60	\\
	&		&	FSSM-C-D         	&	4.65	&	35.54	&	39.61	&	7.11e-07	&	1.75e-06	&	8.88e-17	&	0.60	\\
	&		&	FSSM-S-C         	&	3.51	&	28.02	&	40.15	&	1.59e-06	&	2.09e-05	&	9.98e-08	&	0.61	\\
	&		&	FSSM-S-D         	&	4.70	&	33.82	&	37.79	&	6.92e-07	&	1.80e-06	&	2.02e-09	&	0.61	\\
	&		&	FPQSM-C          	&	3.88	&	34.73	&	35.67	&	3.90e-06	&	9.09e-06	&	6.20e-09	&	0.64	\\
	&		&	FPQSM-D          	&	4.59	&	35.99	&	34.54	&	5.49e-08	&	8.30e-08	&	2.51e-10	&	0.64	\\  \midrule 
20	&	20	&	FSSM-C-C         	&	2.15	&	44.69	&	65.98	&	1.98e-06	&	2.33e-05	&	1.12e-16	&	1.05	\\
	&		&	FSSM-C-D         	&	2.78	&	56.12	&	64.96	&	8.08e-07	&	2.18e-06	&	6.39e-17	&	1.08	\\
	&		&	FSSM-S-C         	&	2.16	&	44.44	&	65.56	&	2.01e-06	&	2.37e-05	&	1.65e-07	&	1.09	\\
	&		&	FSSM-S-D         	&	2.79	&	54.03	&	62.98	&	7.98e-07	&	2.24e-06	&	5.08e-09	&	1.07	\\
	&		&	FPQSM-C          	&	2.70	&	57.58	&	57.46	&	2.76e-06	&	5.19e-06	&	3.50e-09	&	1.10	\\
	&		&	FPQSM-D          	&	2.83	&	58.26	&	56.83	&	3.86e-08	&	6.24e-08	&	9.01e-11	&	1.12	\\  \midrule 
50	&	50	&	FSSM-C-C         	&	7.05	&	71.80	&	100.00	&	2.82e-06	&	2.47e-05	&	1.01e-16	&	2.50	\\
	&		&	FSSM-C-D         	&	8.29	&	86.86	&	103.24	&	9.31e-07	&	2.81e-06	&	1.11e-16	&	2.51	\\
	&		&	FSSM-S-C         	&	7.06	&	71.15	&	99.14	&	2.74e-06	&	2.45e-05	&	2.27e-07	&	2.57	\\
	&		&	FSSM-S-D         	&	8.20	&	84.24	&	101.58	&	8.76e-07	&	2.79e-06	&	1.43e-08	&	2.55	\\
	&		&	FPQSM-C          	&	7.39	&	95.22	&	92.41	&	1.33e-06	&	2.69e-06	&	1.04e-08	&	2.57	\\
	&		&	FPQSM-D          	&	9.89	&	97.67	&	92.29	&	6.64e-08	&	9.44e-08	&	1.00e-09	&	2.60	\\  \midrule 
80	&	80	&	FSSM-C-C         	&	2.52	&	92.68	&	133.29	&	3.51e-06	&	2.58e-05	&	4.17e-17	&	3.98	\\
	&		&	FSSM-C-D         	&	2.80	&	107.37	&	136.78	&	8.82e-07	&	3.02e-06	&	4.00e-17	&	4.07	\\
	&		&	FSSM-S-C         	&	2.51	&	91.60	&	131.98	&	3.28e-06	&	2.51e-05	&	2.01e-07	&	4.13	\\
	&		&	FSSM-S-D         	&	2.78	&	104.81	&	135.32	&	8.04e-07	&	2.94e-06	&	2.36e-08	&	4.06	\\
	&		&	FPQSM-C          	&	3.12	&	123.91	&	122.10	&	7.67e-07	&	1.39e-06	&	3.34e-09	&	4.10	\\
	&		&	FPQSM-D          	&	3.19	&	123.92	&	122.14	&	3.75e-08	&	6.39e-08	&	2.72e-10	&	4.13	\\  \midrule 
100	&	100	&	FSSM-C-C         	&	2.31	&	105.54	&	135.92	&	3.93e-06	&	2.57e-05	&	3.62e-17	&	5.12	\\
	&		&	FSSM-C-D         	&	3.57	&	121.66	&	139.78	&	8.59e-07	&	3.01e-06	&	4.97e-17	&	5.03	\\
	&		&	FSSM-S-C         	&	2.30	&	104.16	&	134.41	&	3.63e-06	&	2.48e-05	&	2.11e-07	&	5.25	\\
	&		&	FSSM-S-D         	&	2.49	&	116.38	&	138.29	&	7.73e-07	&	2.93e-06	&	2.82e-08	&	5.20	\\
	&		&	FPQSM-C          	&	3.02	&	140.00	&	126.28	&	7.75e-07	&	1.29e-06	&	2.87e-09	&	5.29	\\
	&		&	FPQSM-D          	&	3.03	&	139.82	&	126.38	&	4.00e-08	&	6.60e-08	&	2.59e-10	&	5.36	\\ \bottomrule				\end{tabular}		 }								
    \caption{\label{compare--tb1} Results for  FSSM-C, FSSM-S, and FPQSM \cite{HI20} across different dimensions ($k$) and numbers of constraints ($p$).}       				    \label{ex2-tab:comparison}						\end{table}										
    
As shown in Table~\ref{ex2-tab:comparison}, under the experimental setup, across all values of \(k\) and \(p\), the FSSM-C-C and the FSSM-S-C outperform the FPQSM by achieving lower costs, higher profits, and lower objective function values. Meanwhile, the FSSM-C-C and the FSSM-C-D outperform the FPQSM in terms of relative feasibility errors. In addition, all variants of our algorithm require less computational time than the FPQSM, highlighting the overall advantage of our method.

\subsection{Minimizing a Finite Sum of Ratios of Linear Functions}

In this example, we consider the following sum of ratios optimization problem:
\begin{equation} \label{ex2-P-Incremental}\displaystyle
 \tag{FSP}
	\begin{array}{ll}
\mathrm{minimize}\hskip0.2cm\indent  \displaystyle F(x):=\sum\limits_{i=1}^{m}\frac{\sum_{j=1}^k c_{ij}x^j + r^i}{\sum_{j=1}^k d_{ij}x^j + s^i}\\
	\mathrm{subject \hskip0.2cm to}\indent \displaystyle  \sum_{j=1}^k a_{lj}x^j \leq  b_l, \indent l=1,\ldots,p,\\ 
    \indent\hspace{1.8cm} x^j\in[0,100], \indent j=1,\ldots,k.
	\end{array}  
 \end{equation}
where $c_{ij}, d_{ij}$, $i = 1, 2, \ldots, m$, $j = 1, 2,\ldots, k$, are all randomly generated in the interval $(0.1, 10)$, $r^{i}, s^{i}$, $i = 1, 2,\ldots, m$, are all randomly generated in the interval $(0,1)$, $a_{lj}$, $l = 1, 2, \ldots, p$, $j = 1, 2,\ldots, k$, are all randomly generated in the interval $(-10, 10)$ and $b_l$, $l =
1, 2,\ldots, p$, are all randomly generated in the interval $(0, 10)$.

In this example, we apply the proposed IFSSM and an existing method to solve Problem (\ref{ex2-P-Incremental}). 
For \( x := (x^j)_{j=1}^k \in \mathbb{R}^k \), we define the main operator used in the IFSSM as 
$T_l:= P_{C_l}$,
where each $C_l\subset\R^k$ is defined as follows. For  $l=1,2,\ldots,p$, the constraint set is given by  $C_l:=\{x\in\R^k:\sum_{j=1}^k a_{lj}x_j \leq  b_l \},$ 
and for $l=p+1$, the constraint set is  
$C_{p+1}:=[0,100]^k$.
Note that the IFSSM is proposed for the setting where the number of objective functions and the number of constraints are equal. In this experiment, for cases where these numbers differ, we apply the following strategy:
    if $m<p+1$, 
    we set $f_{i}:=0$ for all $i=m+1, m+2,\ldots,p+1$;
     if $m>p+1$, 
     we set $T_{i}:=Id$ for all $i=p+2, p+3, \ldots,m$.
 We initialized the vector with $k$ components, all set to $1$, i.e., a 
$k\times1$ vector of ones. We set $\eta_n=\frac{1}{n+1}$.
The experimental methods were terminated using the following stopping criterion:
$$\max\left( \frac{\|x^{n+1} - x^n\|}{\|x^n\| + 1}, \frac{|F(x^{n+1}) - F(x^n)|}{F(x^n) +1} \right) \le 10^{-5}.$$

Since the ratio of linear functions is quasi-convex, we employ the {\it incremental quasi-subgradient method} (IPQSM), proposed in \cite[Algorithm 1]{HYY19}, as a benchmark. 

Note that at each subiteration, the IPQSM utilizes the metric projection as its main operator. 
However, due to the complex structure of the constraints in Problem (\ref{ex2-P-Incremental}), computing metric projections onto such sets is difficult.
To address this issue, we apply the classical {\it Halpern iteration} as an inner approximation scheme.
Specifically, for each $i=1,\ldots,m$ and $s\in\N$, 
we compute
$$u^{i,s+1} := \lambda_s(x^{i-1,n} - v_n \nabla\tilde{f}_i(x^{i-1,n})) + (1-\lambda_s)T(u^{i,s}),$$
 where $T:=P_{C_{p+1}}P_{C_p}P_{C_{p-1}}\cdots P_{C_1}$. 
Note that under certain conditions on $\{\lambda_s\}_{s=1}^\infty$, the sequence $\{u^{i,s}\}_{s=1}^\infty$ converges to the projection $P_C(x^{i-1,n} - v_n \nabla\tilde{f}_i(x^{i-1,n}) )$, where $C=\bigcap_{l=1}^{p+1}C_l$; see \cite[Theorem 30.1]{BC17} for further details.
For the benchmark approach, we set the initial point $x^1$ and the initial inner point $u^1$ as  vectors with all coordinates equal to $1$, and set $v_n=\lambda_n = \frac{1}{n+1}$ for all $n \in \mathbb{N}$. The number of inner-loop iterations for the IPQSM is fixed at 10. 
We conduct 10 independent random trials. The average number of iterations ($\#\mathrm{Iters}$), computational runtimes, and final objective values (Func. Val.) are reported in Table \ref{tab1}. 

\begin{table}[H]
	
	\caption{Results for IFSSM and IPQSM \cite{HYY19} across different dimensions ($k$), numbers of ratio terms in the objective function ($m$), and numbers of linear inequality constraints ($p$).}\label{tab1}
	\centering
	
	\begin{tabular}{ l l l r r r r r r r r r r r}\toprule
		\multirow{2}{*}{$k$}&\multirow{2}{*}{$m$}&\multirow{2}{*}{$p$}
		& \multicolumn{3}{c}{IFSSM} &\multicolumn{3}{c}{IPQSM \cite{HYY19}} \\ 
        \cmidrule(l){4-6}  \cmidrule(l){7-9} 
		& & & $\#$Iters &Time& Func. Val. &$\#$Iters &Time & Func. Val.\\ \midrule
5	&	5	&	10	&	5331	&	0.41	&	3.83	&	12028	&	17.75	&	5.76	\\
	&		&	50	&	1323	&	0.20	&	5.14	&	1398	&	8.43	&	5.08	\\
	&		&	100	&	2258	&	0.62	&	4.52	&	1570	&	18.70	&	4.22	\\
	&	10	&	10	&	5013	&	0.49	&	12.48	&	6495	&	19.84	&	10.77	\\
	&		&	50	&	1606	&	0.30	&	11.65	&	3015	&	39.94	&	12.54	\\
	&		&	100	&	1743	&	0.51	&	11.66	&	1186	&	27.90	&	13.45	\\
	&	50	&	10	&	2440	&	0.37	&	117.01	&	7017	&	94.98	&	157.49	\\
	&		&	50	&	2770	&	1.04	&	55.05	&	2597	&	151.84	&	60.00	\\
	&		&	100	&	2033	&	0.99	&	60.49	&	2316	&	265.28	&	71.42	\\
	&	100	&	10	&	2558	&	0.60	&	126.97	&	10407	&	282.16	&	217.17	\\
	&		&	50	&	2854	&	1.30	&	111.69	&	2654	&	310.71	&	146.87	\\
	&		&	100	&	2465	&	1.79	&	114.35	&	1846	&	423.77	&	144.62	\\  \midrule 
10	&	5	&	10	&	8075	&	0.86	&	5.44	&	12326	&	17.87	&	5.31	\\
	&		&	50	&	2093	&	0.34	&	6.23	&	6671	&	39.55	&	6.62	\\
	&		&	100	&	1525	&	0.42	&	5.04	&	2005	&	23.07	&	5.02	\\
	&	10	&	10	&	9348	&	1.29	&	10.16	&	10055	&	28.00	&	11.20	\\
	&		&	50	&	3168	&	0.61	&	11.20	&	6063	&	71.27	&	15.89	\\
	&		&	100	&	4270	&	1.34	&	10.05	&	2567	&	59.02	&	14.75	\\
	&	50	&	10	&	4262	&	0.71	&	71.29	&	9036	&	122.86	&	73.83	\\
	&		&	50	&	2849	&	1.08	&	53.97	&	5809	&	340.78	&	58.41	\\
	&		&	100	&	2781	&	1.38	&	53.32	&	2165	&	248.59	&	57.20	\\
	&	100	&	10	&	3723	&	0.91	&	122.04	&	9454	&	256.70	&	234.35	\\
	&		&	50	&	3184	&	1.48	&	108.28	&	4281	&	502.21	&	149.73	\\
	&		&	100	&	3156	&	2.33	&	103.89	&	4144	&	951.79	&	129.09	\\  \midrule 
50	&	5	&	10	&	6926	&	1.65	&	4.55	&	7027	&	11.88	&	3.95	\\
	&		&	50	&	12254	&	5.89	&	5.37	&	3629	&	23.77	&	5.15	\\
	&		&	100	&	6921	&	3.26	&	5.65	&	6946	&	89.26	&	5.67	\\
	&	10	&	10	&	5740	&	1.34	&	10.43	&	7719	&	24.67	&	11.03	\\
	&		&	50	&	9345	&	4.03	&	10.34	&	6220	&	81.49	&	11.33	\\
	&		&	100	&	9261	&	5.17	&	9.45	&	7528	&	192.89	&	9.42			 \\ \bottomrule
	\end{tabular}
	
\end{table}

As demonstrated in Table \ref{tab1}, under the experimental setup, the IFSSM consistently outperforms the IPQSM \cite{HYY19} by requiring less computational time across all tested scenarios. This advantage becomes even more marked for larger problem sizes, which may be attributed to the fact that proposed method does not require solving subproblems.
\section{Concluding Remarks}

This paper addressed a class of nonsmooth fractional programming problems with fixed-point constraints, where the numerator is convex and the denominator is concave. To solve this problem, we proposed the FSSM,
which computes the subgradients for the convex numerator and concave denominator separately, updating each function independently during the optimization process. We established the subsequential convergence of the FSSM under specific conditions on the step size and the boundedness of the sequence $\{x_n\}_{n=1}^{\infty}$.
To eliminate the boundedness assumption of the FSSM, we introduced a modified version of the algorithm which is the AFSSM. We then proved its strong convergence under certain conditions related to the step size and the strong convexity of the numerator $f$.
Additionally, we proposed the IFSSM to address large-scale optimization problems. This method is designed for solving nonsmooth sum-of-ratios fractional programming problems over the intersection of fixed-point sets of firmly nonexpansive operators. We established the subsequential convergence for the IFSSM under appropriate assumptions.
The proposed methods offer a significant advantage by eliminating the need to solve subproblems at each iteration. This simplification reduces the overall complexity of the optimization process by removing the necessity of selecting specific subproblem-solving techniques or fine-tuning parameters, thereby potentially reducing both computational time and complexity in certain scenarios. Furthermore, by incorporating fixed-point constraints, the proposed methods are particularly well-suited for problems with complex constraint structures.
Finally, we evaluated the numerical performance of the proposed methods through three examples. The results demonstrate that our methods outperformed existing methods across a range of performance metrics within the given experimental setup.

Building on the algorithms developed in this paper, several future directions can be explored. 
One key direction is to generalize the methods to solve 
nonsmooth convex-convex fractional programming problems, and potentially extend them to non-convex optimization settings. The methods presented here establish subsequential convergence for two of the three proposed approaches, which, to the best of our knowledge, represent the furthest results achievable under the given assumptions. Further research could explore whether these results can be extended to strong convergence or other forms of convergence under more general conditions. Additionally, one might explore the possibility of removing the bounded assumption of the sequence $\{x^n\}_{n=1}^{\infty}$ in the FSSM, which could lead to further improvements in convergence.



\section*{Data availability}
The datasets generated and/or analysed during the current study are available from the corresponding author on reasonable request.

		\section*{Acknowledgement}
  M. Prangprakhon was partially
supported by Science Achievement Scholarship of Thailand (SAST), and Faculty of Science, Khon Kaen
University.


\section*{Funding}
This research was supported by the Fundamental Fund of Khon Kaen University. This research has received funding support from the National Science, Research and Innovation Fund or NSRF.

\section*{Conflict of interest}
The authors have no competing interests to declare that are relevant to the content of this article.

\end{document}